\documentclass[10pt,a4wide]{article}

\usepackage{amsfonts}
\usepackage{amsmath}
\usepackage{amsthm}
\usepackage{amssymb}
\usepackage{oldgerm}
\usepackage[]{graphicx}
\usepackage{hyperref}

\newtheorem{theorem}{Theorem}[section]

\newtheorem{proposition}[theorem]{Proposition}
\newtheorem{remark}[theorem]{Remark}
\newtheorem{lemma}[theorem]{Lemma}

\newtheorem{definition}[theorem]{Definition}

\DeclareMathOperator{\supp}{supp}
\DeclareMathOperator{\card}{card}
\DeclareMathOperator{\diam}{diam}

\begin{document}

\title{Asymptotic optimality of scalar Gersho quantizers}
\author{Wolfgang Kreitmeier\footnote{The
author was supported through a grant from the German Research Foundation
(DFG).}}
\date{}
\maketitle

\begin{abstract}
In his famous paper \cite{Ge79} Gersho stressed that the codecells of
optimal quantizers asymptotically make an equal contribution to the distortion of
the quantizer. Motivated by this fact, we investigate in this paper  
quantizers in the scalar case, where each codecell contributes with exactly the same portion to the 
quantization error. We show that such quantizers of Gersho type - or Gersho quantizers for short - 
exist for non-atomic scalar distributions. 
As a main result we prove that Gersho quantizers are asymptotically optimal.   
\end{abstract}

~\\
\noindent 
\textbf{Keywords} Asymptotically optimal 
quantization $\cdot$ Quantization error $\cdot$ Scalar quantization $\cdot$ Gersho quantizer 
$\cdot$ High rate quantization
\medskip \\
\textbf{Mathematics Subject Classification} 41A29 (68P30, 94A12, 94A29)  
 
\pagestyle{myheadings}
\markboth{W. Kreitmeier}{Asymptotic optimality of Gersho quantizers}

\section{Introduction and notation}

Let $\mu$ be a Borel probability measure on $\mathbb{R}^{d}$. A Borel-measurable mapping 
$q : \mathbb{R}^{d} \to \mathbb{R}^{d}$ is called  
quantizer if $\card ( q( \mathbb{R}^{d} ) ) \leq \card( \mathbb{N} )$, where $\card$ denotes cardinality.
For any quantizer $q$, norm $\| \cdot \|$ and norm exponent $r>0$ we define the distortion or quantization error 
\[
D(\mu , q, r) = \int \| x - q(x) \|^{r} d \mu (x) = 
\sum_{a \in q(\mathbb{R}^{d})} \int_{q^{-1}(a)} \| x - a \|^{r} d \mu (x),
\]
where $q^{-1}(a)$ is called codecell of the codepoint $a \in q(\mathbb{R}^{d})$.
The set $q(\mathbb{R}^{d})$ is called codebook.
The quantization error can be interpreted as a measure for the distance between $\mu$ and the image  
$\mu \circ q^{-1}$ of $\mu$ under $q$. 
Indeed, if $\mathbb{R}^{d}$ is equipped with the Euclidean norm and if 
$\mu$ is vanishing on 
continuously differentiable $(d-1)$-dimensional submanifolds of $\mathbb{R}^{d}$,
then the quantization error 
equals the $L_{r}-$Wasserstein distance (see e.g. \cite[Theorem 2.6]{Kr11}).
Let us denote the set of all quantizers on $\mathbb{R}^{d}$ by $\mathcal{Q}$.
For $n \in \mathbb{N}$ we define the optimal $n-$th quantization error for $\mu$ of order $r$ as
\[
D_{n, r}(\mu ) = \inf \{ D(\mu , q, r) : q \in \mathcal{Q} \text{ and } \card ( q( \mathbb{R}^{d} ) ) \leq n \}.
\]
The problem of optimal quantization is to find an optimal $n-$level quantizer for $\mu$, i.e. a 
$q \in \mathcal{Q}$ with $D(\mu , q, r) = D_{n, r}(\mu )$. 
Although an optimal quantizer exists under
weak assumptions on $\mu$ (cf. \cite[Theorem 4.12]{GrLu00}), the determination of the optimal 
quantizers has been achieved so far only for a few distributions 
(see e.g. \cite[p.69ff]{GrLu00}, \cite{FoPa02}, \cite{KeZh07}, \cite{Kr08}).
As another difficulty, often more than one optimal quantizer exists. This phenomenon of non-uniqueness can 
happen for distributions which are absolutely continuous with respect to the Lebesgue measure 
(cf. \cite{AbWi81} resp. \cite[Example 5.2]{GrLu00}) but also for distributions which are singular 
\cite{GrLus97, Kr08}.

Due to these difficulties and also in view of aspects 
in applications
(see \cite{GrNe98, GeGr92}
for an excellent overview), one is more interested in asymptotics
of the optimal quantization error for large quantization levels $n$.
For distributions which have a non-vanishing part that is absolutely continuous 
with respect to the Lebesgue measure and which are satisfying a certain moment condition, 
these asymptotics are well-known \cite{Za63, BuWi82, GrLu00}.
Now $n$-level quantizers are of interest, which induce the same error asymptotics as the optimal ones
if $n$ tends to infinity. Such a sequence of quantizers is called asymptotically optimal.

Delattre et al. \cite{DeFoPa04} have shown for a large class of scalar distributions, that for 
any sequence of asymptotically optimal quantizers, which in addition satisfy a condition of stationarity, 
the codecells contribute asymptotically with equal portion to the distortion of the quantizer.
This behavior was first mentioned by Panter and Dite \cite{PaDi51} for optimal scalar quantizers under 
high rates. T\'{o}th \cite{To59} and Gersho \cite{Ge79} conjectured this asymptotical behavior of optimal
quantizers also for higher-dimensional distributions. 
Now one can question if it is possible to construct 
asymptotically optimal quantizers by using
this property of asymptotically equal moments on each codecell.
Apart from \cite{DeFoPa04}, the formulation of this uniformity is quite vague
or only conjectured. To start with a thorough mathematical formulation
we consider quantizers where each codecell contributes with exactly the same portion to the 
quantization error. 
In the following section
we will show that such quantizers of Gersho type - or Gersho quantizers for short - 
exist for non-atomic scalar distributions and all levels $n$.
Moreover, we will show that Gersho quantizers are even unique for all levels $n$
if $\mu$ has an interval as support.
 
Then, one can ask if such Gersho quantizers are asymptotically optimal. 
As a main result of this paper we will give in section three
a positive answer to this question for a 
large class of scalar probability distributions (cf. Theorem \ref{main_theo}). 
In the last section we provide concluding remarks which are mainly consisting of several 
remaining open questions. 

For the rest of this paper let us assume that $\mu$ is 
one-dimensional and non-atomic, i.e. $\mu(\{ x \}) = 0$ 
for every $x \in \mathbb{R}$.
Moreover, we assume throughout the paper that $\mu$ has a finite $r-$th moment.
For any Borel-measurable $A \subset \mathbb{R}$ with $\mu (A) > 0$ we denote by
$\mu ( \cdot | A )$ the conditional probability of $\mu$ with respect to $A$.
We denote by $\mathcal{C}(n,\mu, r)$ the set of all $n-$optimal quantizers for $\mu$ of order $r$.

\begin{definition}
We call $q \in \mathcal{Q}$ an $n-$level Gersho-quantizer of order $r$ for $\mu$ if
\begin{itemize}
\item[(G1)]
$\card( q(\mathbb{R}) ) = n$,
\item[(G2)]
$q^{-1}(a)$ is an interval for every $a \in q( \mathbb{R} )$,
\item[(G3)]
every $a \in q( \mathbb{R} )$ is optimal for its own codecell, i.e.
the mapping $\mathbb{R} \ni x \to a$ is an element 
of $\mathcal{C}(1,\mu ( \cdot | q^{-1}(a) ), r)$ and
\item[(G4)]
every codecell of $q$ contributes an equal amount to the overall distortion, i.e. 
$\int_{q^{-1}(a)} |x-a|^{r} d \mu (x) = \frac{1}{n} D(\mu , q , r)$ for every $a \in q( \mathbb{R} )$.
\end{itemize}
Let us denote by $\mathcal{G}(n,\mu, r)$ the set of all 
$n-$level Gersho-quantizers of order $r$ for $\mu$.
\end{definition}

\begin{remark}
\label{ref_mue_gz}
Because $\mu$ is non-atomic, we know that
$D(\mu , q , r) > 0$ for every $q \in \mathcal{G}(n,\mu, r)$.
Consequently, property (G4) implies that $\mu(q^{-1}(a)) > 0$ 
for every $a \in q( \mathbb{R} )$, i.e. all codecells have non-vanishing $\mu-$mass and, 
therefore, $\mu ( \cdot | q^{-1}(a) )$ in (G3) is well defined. 
\end{remark}

\begin{remark}
\label{rem_voro}
Let $q \in \mathcal{C}(n,\mu, r)$ be an $n-$optimal quantizer.
If we consider the codebook $q(\mathbb{R})$ and a Voronoi partition $\{ A_{a} : a \in  q(\mathbb{R}) \}$
of $\mathbb{R}$ with respect to $q(\mathbb{R})$, i.e.
\[
A_{a} \subset \{ x \in \mathbb{R} : | x - a |  = \min_{b \in q(\mathbb{R})} | x - b | \} 
\quad \mu - \text{a.s. for every } a \in q(\mathbb{R}) ,
\]
then the quantizer $q: \mathbb{R} \to \mathbb{R}$ with $q(x)=a$ if $x \in A_{\alpha}$ 
is also an $n-$optimal one
(cf. \cite[Lemma 3.1]{GrLu00}). Due to this fact, the determination of optimal quantizers is reduced to the 
determination of an optimal $n-$codebook, where the codecells are specified by an arbitrary Voronoi 
partition related to this set. Let us call such quantizers Voronoi quantizers.
Insofar we can and will assume w.l.o.g. that every $n-$optimal quantizer is a Voronoi quantizer and
satisfies (G1), (G2) and (G3), see e.g.
\cite[Lemma 3.1]{GrLu00} and \cite[Theorem 4.1]{GrLu00}. 
Gersho quantizers and optimal quantizers even coincide for some distributions.
E.g. if $\mu$ is the uniform distribution on $[0,1]$, then it follows straightforward from property (G4)
and \cite[Example 5.5]{GrLu00} that $\mathcal{G}(n,\mu, r) = \mathcal{C}(n,\mu, r)$
for every $n \in \mathbb{N}$ and $r > 1$.
In general, this is not the case. Even 
\[
\mathcal{G}(n,\mu, r) \cap \mathcal{C}(n,\mu, r) = \emptyset
\]
and
\[
D_{n,r}(\mu ) < \inf \{ D(\mu ,q, r ) : q \in \mathcal{G}(n,\mu, r) \} 
\]
for certain distributions $\mu$, quantization levels $n$ and norm exponent $r$
is possible, see e.g. $\mu$ as from \cite{AbWi81} resp. \cite[Example 5.2]{GrLu00} with $n=2$ and $r=2$.
It remains open to characterize the Gersho quantizers which are Voronoi quantizers.
\end{remark}

Asymptotically optimal quantizers in the scalar case can be constructed by companding
techniques \cite{Li91}. Alternatively, quantizers - which are unique and optimal
for strongly unimodal scalar distributions (cf. \cite{Tr84}) - can be numerically determined by 
the famous Lloyd algorithm \cite{Ll82}.
To get an overview of the historical development and the numerous modifications of the
Lloyd algorithm the user is referred to \cite{GrNe98}.
One drawback of all these Lloyd methods so far is that at any level $n$ the numerical 
calculation of the quantizer starts from scratch, i.e. the calculation results from lower
quantization levels can not be used to reduce calculation complexity for level $n$.
Moreover, the limit point of the Lloyd algorithm is not always necessarily an 
optimal one.

Clearly, the main goal of this paper is a theoretical one. In the following sections 
we investigate existence, uniqueness and asymptotic optimality of Gersho quantizers. 
Nevertheless, let us mention here that Gersho quantizers are also
of practical relevance in constructing asymptotically optimal quantizers. 

Using the results of this paper, we can determine (for a large class of probabilities) numerically 
the sequence of asymptotically optimal Gersho quantizers where the calculations from lower quantization levels 
are incorporated. 
In more detail, let $k \in \mathbb{N}$ and $n=2^{k}$. 
For $k=1$ Proposition \ref{ref_prop_nempt} ensures the existence of a unique Gersho quantizer. 
We can determine this quantizer numerically e.g. by a bisection algorithm (cf. Remark \ref{rem_bisec}). 
If $k>1$, then the uniqueness of Gersho quantizers (cf. Proposition \ref{ref_prop_nempt}) enables us to determine
the Gersho quantizer for level $n=2^{k}$ by dividing all codecells of the quantizer for level $2^{k-1}$
into two cells having the some moment, such that (G4) is satisfied. This dispartment of the codecells
is also done numerically by application of a bisection algorithm.
Theorem \ref{main_theo} yields the asymptotic optimality of the quantizer sequence.

As a main advantage in contrast to any Lloyd algorithm we always determine a unique solution. 
Moreover, this algorithm is also amenable to a parallel implementation.
Of course, we can determine numerically also the Gersho quantizers 
for level $n=n_{0}2^{k}$ with fixed $n_{0} \geq 2$ and
$k \in \mathbb{N}$ or even any arbitrary level $n$. But in this last general case we 
are unable to use the calculation results from lower quantization levels for the actual one.

\section{Existence and uniqueness of Gersho-quantizers}

As already mentioned in the introduction, optimal quantizers exist under
weak assumptions on $\mu$, see \cite{AbWi82}, \cite{Po82} and \cite[Theorem 4.12]{GrLu00}. 
In this section we will show by Proposition \ref{ref_prop_nempt}, 
that Gersho quantizers also always exist for
non-atomic distributions. 
Before studying uniqueness we need to make some limitations for Gersho quantizers. 
Let $\mathcal{F}(n, \mu , r)$ be the set of all scalar quantizers where all codecells of such a 
quantizer have non-vanishing $\mu-$mass and (G1), (G2) and (G3) are satisfied. 
By definition and in view of Remark \ref{rem_voro} resp. Remark \ref{ref_mue_gz} we know that 
\[
\mathcal{G}(n, \mu , r) \cup \mathcal{C}(n, \mu , r) \subset \mathcal{F}(n, \mu , r) .
\]
Let $q \in \mathcal{F}(n, \mu , r)$ and let $I_{1}(q),\dots , I_{n}(q)$ be the
codecells of $q$ in increasing order.
Let $a_{i}=\inf (I_{i}(q))$ and $b_{i}=\sup (I_{i}(q))$ for every $i \in \{1,..,n\}$.
Clearly, 
\begin{equation}
\label{aibi}
a_{1}=-\infty ,  b_{n}=\infty  \text{ and } a_{i}=b_{i-1} \text{ for every } i \in \{2,..,n\} .
\end{equation}
Because $\mu$ is non-atomic let us assume w.l.o.g. throughout this paper that 
\[
I_{1}(q) = (a_{1} , b_{1}), \qquad I_{i}(q)=[a_{i}, b_{i}) \qquad \text{for every  } i \in \{2,..,n\} . 
\]
Thus, the codecells of any quantizer $q \in \mathcal{F}(n, \mu , r)$
are completely characterized by their boundary points.
With these conventions we will also investigate in this section the uniqueness of 
Gersho quantizers. 
Important for our subsequent argumentation is the following well-known result.

\begin{proposition}
\emph{\textbf{(Uniqueness of $1-$optimal quantizers)}}
\label{unique_1_quant}
If $r>1$, then always exactly one $1-$optimal quantizer for $\mu$ exists, i.e. 
$\card ( \mathcal{C}(1, \mu , r) ) = 1$.
\end{proposition}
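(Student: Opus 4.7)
The plan is to reduce the claim to a strictly convex one-dimensional optimization problem. Every $1$-level quantizer has the form $q(x) \equiv a$ for a single codepoint $a \in \mathbb{R}$, so its distortion equals
\[
F(a) \;=\; \int | x - a |^{r}\, d\mu(x),
\]
and the proposition becomes the assertion that $F : \mathbb{R} \to [0,\infty)$ attains its infimum at a unique point.

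For existence, I would first observe that $F$ is finite on all of $\mathbb{R}$ by the finite $r$-th moment assumption and the elementary bound $|x-a|^r \leq 2^{r-1}(|x|^r + |a|^r)$, and that $F$ is continuous by dominated convergence with the same dominating function on bounded neighbourhoods of $a$. Coercivity, i.e.\ $F(a) \to \infty$ as $|a| \to \infty$, follows from the same inequality in reversed form, $|a|^r \leq 2^{r-1}(|x|^r + |x-a|^r)$: integrating against $\mu$ yields $F(a) \geq 2^{1-r}|a|^r - \int |x|^r\, d\mu$. Continuity plus coercivity gives a minimizer.

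For uniqueness, the key ingredient is strict convexity of $F$. Since $r > 1$, the scalar function $t \mapsto |t|^r$ is strictly convex on $\mathbb{R}$: it is $C^{1}$ with derivative $r|t|^{r-1}\mathrm{sgn}(t)$, which is strictly increasing. Hence for any $a_0 \neq a_1$ and $\lambda \in (0,1)$,
\[
\bigl| x - \lambda a_0 - (1-\lambda) a_1 \bigr|^r < \lambda |x - a_0|^r + (1-\lambda)|x - a_1|^r
\]
holds for every $x \in \mathbb{R}$. Integrating against the probability measure $\mu$ preserves the strict inequality, so $F$ is strictly convex, which forces uniqueness of the minimizer.

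The argument is essentially standard convex analysis and I do not anticipate a deep obstacle. The one mildly delicate point is verifying strict convexity of $|\cdot|^{r}$ uniformly for all $r>1$, since the second derivative blows up at the origin when $1<r<2$; the $C^{1}$ monotone-derivative argument above sidesteps this issue. The hypothesis $r > 1$ is sharp, since for $r = 1$ the set of minimizers of $F$ is the set of medians of $\mu$, which need not be a singleton.
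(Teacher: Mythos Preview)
Your proposal is correct and follows exactly the approach the paper sketches: the paper's proof consists of a single sentence asserting strict convexity of $a \mapsto \int |x-a|^r\,d\mu(x)$ and a citation to \cite[Theorem 2.4]{GrLu00} for details. You have filled in those details (finiteness, continuity, coercivity for existence; pointwise strict convexity of $|\cdot|^r$ for $r>1$ integrated against $\mu$ for uniqueness), which is precisely what the referenced result supplies.
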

\begin{proof}
The assertion follows from the strict convexity of the mapping 
\[
\mathbb{R} \ni a \to \int |x-a|^{r} d \mu (x) \in (0, \infty ).
\]
For a complete proof see e.g. \cite[Theorem 2.4]{GrLu00}.
\end{proof}

In the following, Proposition \ref{lemm_equal} shows that every 
$n$-level Gersho-quantizer induces the same quantization error. Moreover, 
Proposition \ref{ref_prop_nempt} states that exactly one $n$-level 
Gersho quantizer exists, i.e.  
$\card ( \mathcal{G}(n,\mu, r) ) =1$
if the support of $\mu$ is an (possibly unbounded) interval.

For any $-\infty \leq a < b \leq \infty$ we define $I_{a,b}=(a,b)$.
If $\mu ( I_{a,b} ) > 0$ and $r>1$, then Proposition \ref{unique_1_quant} 
implies that always exactly one $1$-optimal quantizer for $\mu ( \cdot | I_{a,b} )$
of order $r$ exists, i.e. $\mathcal{C}(1, \mu ( \cdot | I_{a,b} ) , r)$ 
is not empty and consists of exactly one
element. We denote by $c_{a,b} \in \mathbb{R}$ the point which 
represents this unique optimal
quantizer of $\mu ( \cdot | I_{a,b} )$, i.e. 
\[
\int | x - c_{a,b} |^{r} d \mu ( \cdot | I_{a,b} ) (x) = D_{1,r}(\mu ( \cdot | I_{a,b} )) .
\]
In view of \cite[Lemma 2.6(a)]{GrLu00} we additionally know that $c_{a,b} \in [a,b]$.
If we want to stress the dependency of $c_{a,b}$ on $\mu$ we write $c_{a,b}(\mu )$.
Let us denote by $\supp ( \mu )$ the support of $\mu$.
For any set $I \subset \mathbb{R}$ we denote by $1_{I}$ the indicator function of $I$.
Now let us consider the $r-$th moment $\int_{I_{a,b}} | x - c_{a,b} |^{r} d \mu (x)$ 
on $I_{a,b}$. It seems to be obvious that this moment increases with $b$.
Let us prove this important result. 

\begin{lemma}
\label{lemm_ex_01}
Let $-\infty \leq a < b < \infty$ and 
\[
J=( a-b , \infty ) \cap ( \inf ( \supp ( \mu ) ) , \sup ( \supp ( \mu ) ) ) .
\]
If $\mu ( I_{a,b} ) > 0$ and $r>1$, then the mapping
\begin{equation}
\label{def_wab}
J \ni z \to W_{a,b, \mu }(z) = \int_{I_{a,b+z}} | x - c_{a,b+z}|^{r} d \mu (x) 
\end{equation}
is continuous and increasing. 
Moreover, for every $x,y \in J$ with $x < y$
\[
W_{a,b, \mu }(x) < W_{a,b, \mu }(y) \text{ if and only if } \mu ( [x+b,y+b] ) > 0.
\]
\end{lemma}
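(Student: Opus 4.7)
I would set $F(c,z):=\int_{I_{a,b+z}} |t-c|^r\, d\mu(t)$, so that $W_{a,b,\mu}(z)=F(c_{a,b+z},z)=\min_c F(c,z)$ (the positive normalizing factor $\mu(I_{a,b+z})$ does not affect the minimizer). For $z_1<z_2$ in $J$, chaining
\[
W_{a,b,\mu}(z_1) \;\leq\; F(c_{a,b+z_2},z_1) \;\leq\; F(c_{a,b+z_2},z_2) \;=\; W_{a,b,\mu}(z_2)
\]
gives the monotone behavior: the first inequality is the defining optimality of $c_{a,b+z_1}$, the second follows from $I_{a,b+z_1}\subset I_{a,b+z_2}$ and non-negativity of the integrand.

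\textbf{Strict monotonicity.} For the equivalence, I treat both directions. If $\mu([x+b,y+b])=0$, then since the symmetric difference between $I_{a,b+x}$ and $I_{a,b+y}$ is $[b+x,b+y)$ and $\mu$ is non-atomic, $\mu$ restricted to those two intervals agrees as a measure; hence the conditionals $\mu(\cdot|I_{a,b+x})$ and $\mu(\cdot|I_{a,b+y})$ coincide, uniqueness (Proposition \ref{unique_1_quant}) forces $c_{a,b+x}=c_{a,b+y}$, and the two unnormalized integrals coincide, giving $W_{a,b,\mu}(x)=W_{a,b,\mu}(y)$. Conversely, if $\mu([x+b,y+b])>0$, non-atomicity yields $\mu([b+x,b+y)\setminus\{c_{a,b+y}\})>0$, and since $|\cdot-c_{a,b+y}|^r>0$ off the single point $c_{a,b+y}$, the second inequality above becomes strict, so $W_{a,b,\mu}(x)<W_{a,b,\mu}(y)$.

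\textbf{Continuity.} I would establish upper and lower semicontinuity at each $z\in J$. For $z_n\to z$, upper semicontinuity is immediate from $W_{a,b,\mu}(z_n)\leq F(c_{a,b+z},z_n)$ and dominated convergence, with dominating function $|t-c_{a,b+z}|^r$, which is $\mu$-integrable by the finite $r$-th moment assumption. For lower semicontinuity I would first show that the sequence $(c_{a,b+z_n})$ is bounded, extract a subsequence $c_{a,b+z_{n_k}}\to c^*$, and conclude via joint continuity of $F$ that $\liminf_n W_{a,b,\mu}(z_n)\geq F(c^*,z)\geq W_{a,b,\mu}(z)$.

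\textbf{Main obstacle.} The subtlest step is bounding $(c_{a,b+z_n})$ from below when $a=-\infty$: the upper bound $c_{a,b+z_n}\leq b+z_n$ from \cite[Lemma 2.6(a)]{GrLu00} comes for free, but a lower bound has to be extracted from coercivity. Concretely, if some subsequence tended to $-\infty$, then $F(c_{a,b+z_n},z_n)\geq(c_0-c_{a,b+z_n})^r\mu((-\infty,c_0))$ would diverge for any fixed $c_0$ with $\mu((-\infty,c_0))>0$, contradicting $W_{a,b,\mu}(z_n)\leq F(c_0,z_n)<\infty$. Once this uniform boundedness is established on a compact neighborhood of $z$, the rest of the continuity argument is standard.
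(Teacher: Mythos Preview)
Your argument is correct and, for the continuity part, takes a somewhat different route from the paper. The paper first proves that $z\mapsto c_{a,b+z}$ is continuous (by a contradiction argument exploiting strict convexity of $c\mapsto\int_{I_{a,b+z}}|x-c|^r\,d\mu$), and only then deduces continuity of $W_{a,b,\mu}$ via dominated convergence. You instead work directly with the variational characterization $W_{a,b,\mu}(z)=\min_c F(c,z)$: upper semicontinuity comes from evaluating at the limiting minimizer, lower semicontinuity from compactness and extraction. Your approach is cleaner and closer in spirit to standard variational/$\Gamma$-convergence arguments; the paper's approach has the side benefit of yielding continuity of the minimizer map, which you do not need here. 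For monotonicity and the strict-monotonicity equivalence, your chain $W(z_1)\le F(c_{a,b+z_2},z_1)\le F(c_{a,b+z_2},z_2)$ is exactly the paper's inequality (\ref{wabz}).

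One slip in the coercivity step: the displayed inequality
\[
F(c_{a,b+z_n},z_n)\ \geq\ (c_0-c_{a,b+z_n})^r\,\mu\bigl((-\infty,c_0)\bigr)
\]
is not valid as written. Once $c_{a,b+z_n}<c_0$ (which eventually happens if $c_{a,b+z_n}\to-\infty$), the integrand $|t-c_{a,b+z_n}|^r$ is small for $t$ near $c_{a,b+z_n}\in(-\infty,c_0)$, so you cannot lower-bound it pointwise on $(-\infty,c_0)$ by $(c_0-c_{a,b+z_n})^r$. The repair is immediate: choose instead an interval $(c_0,c_1)\subset(-\infty,b+z)$ with $\mu((c_0,c_1))>0$; then for large $n$ one has $c_{a,b+z_n}<c_0$ and $c_1<b+z_n$, whence
\[
F(c_{a,b+z_n},z_n)\ \geq\ \int_{(c_0,c_1)}|t-c_{a,b+z_n}|^r\,d\mu(t)\ \geq\ (c_0-c_{a,b+z_n})^r\,\mu\bigl((c_0,c_1)\bigr)\ \longrightarrow\ \infty,
\]
giving the desired contradiction with $W_{a,b,\mu}(z_n)\le F(c_0,z_n)$ bounded.
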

\begin{proof}
Note that $\mu ( I_{a,b+z} ) > 0$ for every $z \in J$. \smallskip \\
1. First we will show that $J \ni z \to c_{a,b+z}$ is continuous. \smallskip \\
Let $z \in J$.
We proceed indirectly and assume that an $M>0$ and a sequence $(\varepsilon_{n})_{n \in \mathbb{N}}$ exist with
$z+\varepsilon_{n} \in J$ for every $n \in \mathbb{N}$,
$\varepsilon_{n} \to 0$ as $n \to \infty$ and 
\begin{equation}
\label{dist_point}
|c_{a,b+z+\varepsilon_{n}} - c_{a,b+z}| \geq M
\end{equation}
for every $n \in \mathbb{N}$.
Since the mapping $\mathbb{R} \ni w \to \int_{I_{a,b+z}} | x - w |^{r} d \mu (z)$ is strictly convex
(see proof of Proposition \ref{unique_1_quant}) we deduce that
\begin{eqnarray}
C_{1}(M) & := &
\max \{ \int_{I_{a,b+z}} | x - w |^{r} d \mu (x) : w \in \{ c_{a, b+z} - M, c_{a, b+z} + M \}  \}  
\nonumber \\
&=& \inf \{ \int_{I_{a,b+z}} | x - w |^{r} d \mu (x) : |w-c_{a, b+z}| \geq M \} .
\label{def_c1m}
\end{eqnarray}
Let 
\[
C_{2}(M) = C_{1}(M) - \int_{I_{a,b+z}} | x - c_{a,b+z} |^{r} d \mu (x) > 0.
\]
Now choose $n_{0} \in \mathbb{N}$ such that 
\begin{equation}
\label{diff_c2m}
\int_{I_{a,b+z+\varepsilon_{n}}} | x - c_{a, b+z} |^{r} d \mu (x) -
\int_{I_{a,b+z}} | x - c_{a, b+z} |^{r} d \mu (x) < C_{2}(M)
\end{equation}
for every $n \geq n_{0}$. Now let $n \geq n_{0}$.
Combining (\ref{dist_point}) and (\ref{def_c1m}) we observe that
\begin{eqnarray*}
\int_{I_{a,b+z+\varepsilon_{n}}} | x - c_{a, b+z+\varepsilon_{n}} |^{r} d \mu (x) 
\geq  \int_{I_{a,b+z}} | x - c_{a, b+z+\varepsilon_{n}} |^{r} d \mu (x) \geq C_{1}(M).
\end{eqnarray*}
Using (\ref{diff_c2m}) we get
\begin{eqnarray*}
&& \int_{I_{a,b+z+\varepsilon_{n}}} | x - c_{a, b+z+\varepsilon_{n}} |^{r} d \mu (x) \\
& \geq &
C_{2}(M) + \int_{I_{a,b+z}} | x - c_{a,b+z} |^{r} d \mu (x) >
\int_{I_{a,b+z+\varepsilon_{n}}} | x - c_{a, b+z} |^{r} d \mu (x), 
\end{eqnarray*}
which contradicts the optimality of $c_{a, b+z+\varepsilon_{n}}$.
Hence, $J \ni z \to c_{a,b+z}$ is continuous.
\smallskip \\
2. Rest of the proof. \smallskip \\
Let $\varepsilon > 0$ and $z \in J$. 
For any $\delta > 0$ we define
\[
C_{1}(\delta ) = \sup \{ | c_{a,b+v} | : v \in J, |v-z| < \delta  \}.
\]
By step 1 we can find a $\delta_{1} > 0$ such that $C_{1}(\delta_{1} ) < \infty$.
For every $v \in J$ with $|z-v| < \delta_{1}$ we have
\[
| x - c_{a,b+v} |^{r} \leq ( 2 \max ( |x|, |c_{a,b+v}| ) )^{r} \leq
2^{r}( | x |^{r} + C_{1}(\delta_{1} )^{r}),
\]
where the right hand side is $\mu$-integrable, because the $r-$th moment of $\mu$
is finite.
Thus a $\delta_{2} \in ( 0, \delta_{1} )$ exists with 
\begin{equation}
\label{upp_bou_mue}
\int_{[b+z-\delta_{2}, b+z+\delta_{2}]} | x - c_{a,b+v} |^{r} d \mu (x) < \frac{\varepsilon}{2}
\end{equation}
for every $v \in J$ with $|z-v|<\delta_{2}$.
For every $\delta_{3} \in ( 0, \delta_{2} )$ we define  
\[
C_{2}(\delta_{3}) = \sup \{ | c_{a,b+z} - c_{a,b+v} | : v \in J , |v - z| < \delta_3  \} < \infty .
\]
Applying step 1 we deduce that
\begin{equation}
\label{conv_to_zero}
C_{2}(\delta) \to 0 \text{ as } \delta \to 0.
\end{equation}
For any $v \in J $ with $|v - z| < \delta_3$ and 
$x \in I_{a,b + z + \delta_{3} }$
we define 
\begin{equation}
\label{def_integr}
f(x,z,v) = | | x - c_{a,b+z}|^{r} -  | x - c_{a,b+v}|^{r} |.
\end{equation}
Clearly,
\begin{equation}
\label{dom_conv_b}
f(x,z,v) \leq ( |x| + |c_{a,b+z}| )^{r}  + ( |x| + |c_{a,b+z}| + C_{2}(\delta_{3}) )^{r}.
\end{equation}
Due to $C_{2}(\delta_{3}) < \infty$ the 
right hand side of (\ref{dom_conv_b}) is $\mu$-integrable.
From (\ref{conv_to_zero}) and (\ref{def_integr}) we deduce that 
$f(x,z,v) \to 0$ as $v \to z$.
Hence, by dominated convergence a $\delta_{4} \in ( 0 , \delta_{3} )$ exists, such that
\begin{equation}
\label{int_upp_b_eps}
\int_{I_{a,b + z }} f(x,z,v) d \mu (x) < \varepsilon / 2 ,
\end{equation}
if $|v - z| < \delta_{4}$.
Now let $v \in J$ such that $|v - z| < \delta_{4}$.
We obtain
\begin{eqnarray*}
&& | W_{a,b, \mu }(z)   - W_{a,b, \mu }(v) | \\
& \leq & | \int_{I_{a,b+\max(v,z)}} | x - c_{a,b+\max(v,z)}|^{r} d \mu (x) \\
&& \qquad \qquad - \int_{I_{a,b+\min(v,z)}} | x - c_{a,b+\max(v,z)}|^{r} d \mu (x) | \\
&+& \quad | \int_{I_{a,b+\min(v,z)}} | x - c_{a,b+\max(v,z)}|^{r} d \mu (x) \\
&& \qquad \qquad  - \int_{I_{a,b+\min(v,z)}} | x - c_{a,b+\min(v,z)}|^{r} d \mu (x) | \\
&\leq & \int_{[b+z-\delta_{4}, b+z+\delta_{4}]} | x - c_{a,b+\max(v,z)} |^{r} d \mu (x)  
+ \int_{I_{a,b+z}} f(x,z,v) d \mu (x). 
\end{eqnarray*}
From (\ref{upp_bou_mue}) and (\ref{int_upp_b_eps}) we deduce that
\[
| W_{a,b, \mu }(z)   - W_{a,b, \mu }(v) | < \varepsilon ,
\]
which yields that the mapping is continuous.
Finally we observe for any 
$x,y \in J$ with $x < y$
that
\begin{eqnarray}
&& W_{a,b, \mu }(y )  - W_{a,b, \mu }(x)  \nonumber \\
& = &
\int_{I_{a,b+y }} | z - c_{a,b+y }|^{r} d \mu (z) 
- \int_{I_{a,b+x}} | z - c_{a,b+x}|^{r} d \mu (z) \nonumber \\
& \geq & \int_{I_{b+x,b+y }} | z - c_{a,b+y }|^{r} d \mu (z) \geq 0,
\label{wabz}
\end{eqnarray}
which implies monotony. Recall that $\mu$ is non-atomic.
Hence, inequality (\ref{wabz}) is strict if and only if $\mu ( [x+b,y+b] )>0$.
\end{proof}

Because we want to show now 
that every $n-$level Gersho quantizer induces the same quantization error, the following
definition makes sense.

\begin{definition}
Let $n \in \mathbb{N}$ and assume that $\mathcal{G}(n,\mu , r) \neq \emptyset$. 
We define
\[
D_{n,r}^{G}(\mu ) = \inf \{ D(\mu , q, r) : q \in \mathcal{G}(n,\mu , r) \}
\]
as the optimal $n-$th Gersho-quantization error for $\mu$ of order $r$.
\end{definition}

Let 
\[
\mathbb{R} \ni x \to T(x)=-x
\]
be the reflection with respect to the origin.

Clearly, the distortion of 
every $n-$optimal quantizer $q \in \mathcal{C}(n,\mu, r)$ equals $D_{n, r}(\mu )$.
Therefore, it is natural to ask if a similar result holds for Gersho
quantizers. The following proposition gives a positive answer.

\begin{proposition}
\emph{\textbf{(Uniqueness of overall distortion)}}
\label{lemm_equal}
~\\
Let $n \in \mathbb{N}$ and $r>1$. Assume that 
an $n-$level Gersho quantizer exists, i.e. $\mathcal{G}(n,\mu , r) \neq \emptyset$.
If $q \in \mathcal{G}(n,\mu , r)$, then the distortion induced by $q$ equals the 
$n-$th Gersho-quantization error, i.e.
\[
D(\mu , q , r) = D_{n,r}^{G}(\mu ) .
\]
\end{proposition}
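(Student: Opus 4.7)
The plan is to show that any two Gersho quantizers $q_1,q_2\in\mathcal{G}(n,\mu,r)$ necessarily produce the same distortion $D_j:=D(\mu,q_j,r)$. By (G4) every codecell of $q_j$ contributes exactly $D_j/n$. The central tool I would establish first is a \emph{containment monotonicity} statement: if $I\subseteq J$ are intervals with $\mu(I)>0$, then
\[
\int_{I}|x-c_{I}|^{r}\,d\mu(x)\;\le\;\int_{J}|x-c_{J}|^{r}\,d\mu(x),
\]
where $c_{I},c_{J}$ are the $1$-optimal centers, well-defined by Proposition \ref{unique_1_quant}. This is immediate from the chain $\int_{I}|x-c_{I}|^{r}d\mu\le\int_{I}|x-c_{J}|^{r}d\mu\le\int_{J}|x-c_{J}|^{r}d\mu$, using optimality of $c_{I}$ on $I$ and non-negativity of the integrand on $J\setminus I$.

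I would then label the boundary points of $q_j$ as $-\infty=b_{0}^{j}<b_{1}^{j}<\dots<b_{n-1}^{j}<b_{n}^{j}=\infty$, so that the $k$-th codecell is $I_{k}^{j}=[b_{k-1}^{j},b_{k}^{j})$ (with the usual left-open convention for $k=1$); by Remark \ref{ref_mue_gz} each $I_{k}^{j}$ has positive $\mu$-mass, so the containment lemma is applicable. The goal reduces to exhibiting indices $k,k'\in\{1,\dots,n\}$ with $I_{k}^{1}\subseteq I_{k}^{2}$ (which yields $D_{1}/n\le D_{2}/n$) and $I_{k'}^{1}\supseteq I_{k'}^{2}$ (which yields the reverse inequality), since the combination forces $D_{1}=D_{2}=D_{n,r}^{G}(\mu)$.

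To produce both containments I would analyse the boundary differences $\delta_k:=b_k^{1}-b_k^{2}$, noting that $\delta_{0}=\delta_{n}=0$. If $\delta_{k}\ge 0$ for all $k$, then at $k=1$ one has $\delta_{0}=0$ and $\delta_{1}\ge 0$, giving $I_{1}^{1}\supseteq I_{1}^{2}$; at $k=n$ one has $\delta_{n-1}\ge 0$ and $\delta_{n}=0$, giving $I_{n}^{1}\subseteq I_{n}^{2}$. The case $\delta_{k}\le 0$ for all $k$ is symmetric (one can also just swap the roles of $q_{1}$ and $q_{2}$). If the sequence $(\delta_{k})$ changes sign, a discrete intermediate-value argument locates adjacent indices $k-1,k$ with $\delta_{k-1}\le 0\le \delta_{k}$ (yielding $I_{k}^{1}\supseteq I_{k}^{2}$) and another adjacent pair with the opposite weak inequality (yielding the reverse containment).

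The step I expect to require the most care is ensuring that the containment lemma is formulated with \emph{weak} inequality only, so that no hypothesis on how $\mu$ distributes mass inside the individual codecells is needed; this is exactly what lets the sign analysis of $(\delta_{k})$ close the argument uniformly across all sub-cases. The unboundedness of $I_{1}^{j}$ and $I_{n}^{j}$ poses no real obstacle, since for $\mu$-positive cells the optimal center $c_{a,b}$ and its defining optimality extend verbatim to the cases $a=-\infty$ or $b=\infty$ (the $r$-th moment of $\mu$ being finite).
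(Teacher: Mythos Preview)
Your argument is correct and is genuinely simpler than the paper's. The paper argues by contradiction: assuming $D(\mu,q,r)<D(\mu,p,r)$, it invokes Lemma~\ref{lemm_ex_01} (with its continuity and strict-monotonicity content, including the reflection $T(x)=-x$ to handle right-to-left comparisons) to propagate the inequality $\sup I_{i}(q)<\sup I_{i}(p)$ inductively through $i=1,\dots,n-1$, and then derives a contradiction at the last cell. Your route bypasses all of this: the two-line containment lemma $\int_{I}|x-c_{I}|^{r}\,d\mu\le\int_{J}|x-c_{J}|^{r}\,d\mu$ for $I\subseteq J$ needs only optimality of $c_{I}$ and non-negativity of the integrand, and the discrete intermediate-value step on the boundary differences $\delta_{k}$ (with the convention $\delta_{0}=\delta_{n}=0$ encoding that the outer endpoints agree) immediately produces containments in both directions. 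What the paper's approach buys is information it will reuse later---the machinery of Lemma~\ref{lemm_ex_01} is needed for existence (Proposition~\ref{ref_prop_nempt}) anyway---whereas your approach isolates exactly what this proposition requires and delivers it with weak inequalities only, avoiding any strict-monotonicity or continuity hypotheses on $W_{a,b,\mu}$. One cosmetic point: rather than writing $\delta_{0}=-\infty-(-\infty)$, you might phrase the endpoint convention as ``$b_{0}^{1}=b_{0}^{2}$ and $b_{n}^{1}=b_{n}^{2}$'' and run the sign-change search on $k\in\{1,\dots,n-1\}$; the argument is unaffected.
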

\begin{proof}
If $n=1$, then the assertion follows immediately from (G3) and Proposition \ref{unique_1_quant}.
Let $n \geq 2$.
Let $I_{1}(q),\dots , I_{n}(q)$ be the codecells of $q$ 
in increasing order of $\mathbb{R}$.
Moreover, let $p \in \mathcal{G}(n,\mu , r)$ and
denote by $I_{1}(p),\dots , I_{n}(p)$ the codecells of $p$ 
in increasing order, too.
We proceed indirectly and assume w.l.o.g. that 
\begin{equation}
\label{contrad}
D(\mu , q , r) < D(\mu , p , r).
\end{equation}
The idea of the proof is as follows. 
Using (\ref{contrad}),(G4) and Lemma \ref{lemm_ex_01} one can show  
that the moment and the right endpoint of the first codecell $I_{1}(q)$ of $q$ is 
strictly smaller than the one of $I_{1}(p)$. By induction this holds 
up to the penultimate codecell of $q$ and $p$. Consequently, the last codecell $I_{n}(q)$ of
$q$ is strictly larger (according to size and moment) than the last one of $p$, which
contradicts (\ref{contrad}).
Now let us elaborate this idea in detail. 
We proceed in two steps.
\smallskip \\
1. We will show that
\[
\sup I_{n-1}(q) < \sup I_{n-1}(p) \quad \text{ and } \mu ( [ \sup I_{n-1}(q), \sup I_{n-1}(p) ] ) > 0.
\]
We proceed by induction and assume first that $n=2$.
According to (\ref{aibi}), (\ref{contrad}) and by (G4) we know that
\begin{eqnarray*}
&& W_{-\infty , 0, \mu }(\sup I_{1}(q) ) \\
&=& \int_{I_{1}(q)} |x - c_{-\infty , \sup I_{1}(q)}|^{r} d \mu (x) <
\int_{I_{1}(p)} |x - c_{-\infty , \sup I_{1}(p)}|^{r} d \mu (x) \\
&=& W_{-\infty , 0, \mu }(\sup I_{1}(p) ). 
\end{eqnarray*}
Applying Lemma \ref{lemm_ex_01} we obtain 
\[
\sup I_{1}(q) < \sup I_{1}(p) \text{ and } \mu ( [ \sup I_{1}(q), \sup I_{1}(p) ] ) > 0.
\]
Now let us assume that $n > 2$, 
\[
\sup I_{n-2}(q) < \sup I_{n-2}(p) \text{ and that } \mu ( [ \sup I_{n-2}(q), \sup I_{n-2}(p) ] ) > 0.
\]
Again from (\ref{contrad}) and (G4) we deduce that
\begin{eqnarray}
&& W_{\inf I_{n-1}(q),0, \mu }(\sup I_{n-1}(q)) \nonumber \\ &=&
\int_{I_{n-1}(q)} | x - c_{\inf I_{n-1}(q) , \sup I_{n-1}(q)} |^{r} d \mu (x) \nonumber \\
&<& 
\int_{I_{n-1}(p)} | x - c_{\inf I_{n-1}(p) , \sup I_{n-1}(p)} |^{r} d \mu (x) \nonumber \\
&=&
\int_{T(I_{n-1}(p))} 
| x - c_{-\sup I_{n-1}(p), - \inf I_{n-1}(p)}(\mu \circ T^{-1}) |^{r} d \mu \circ T^{-1} (x) \nonumber \\
&=&  W_{-\sup I_{n-1}(p),0, \mu \circ T^{-1} }( - \inf I_{n-1}(p)).
\label{strict_ieeq}
\end{eqnarray}
By (\ref{aibi}) we know that
$\inf I_{n-1}(q) = \sup I_{n-2}(q) < \sup I_{n-2}(p) = \inf I_{n-1}(p)$.
Thus, Lemma \ref{lemm_ex_01} implies
\begin{eqnarray}
W_{-\sup I_{n-1}(p),0, \mu \circ T^{-1} }( - \inf I_{n-1}(p)) & \leq &
W_{-\sup I_{n-1}(p),0, \mu \circ T^{-1} }( - \inf I_{n-1}(q)) \nonumber \\
&=& W_{\inf I_{n-1}(q),0, \mu }(\sup I_{n-1}(p)). 
\label{sup_inf_iequ}
\end{eqnarray}
Combing (\ref{strict_ieeq}) and (\ref{sup_inf_iequ}) we get
\[
W_{\inf I_{n-1}(q),0, \mu }(\sup I_{n-1}(q)) < W_{\inf I_{n-1}(q),0, \mu }(\sup I_{n-1}(p)).
\]
Now Lemma \ref{lemm_ex_01} yields 
\[
\sup I_{n-1}(q) < \sup I_{n-1}(p) \text{ and } \mu ( [ \sup I_{n-1}(q), \sup I_{n-1}(p) ] ) > 0.
\]
\smallskip \\
2. Rest of the proof. \smallskip \\
Applying (\ref{aibi}) and definition (\ref{def_wab}) we compute
\begin{equation}
\label{equ_01}
\int_{I_{n}(p)} | x - c_{\inf I_{n}(p) , \infty } |^{r} d \mu (x) =
W_{- \infty ,0, \mu \circ T^{-1} }(- \sup I_{n-1}(p)).
\end{equation}
Now, step 1, Lemma \ref{lemm_ex_01} and definition (\ref{def_wab}) implies
\begin{eqnarray}
W_{- \infty ,0, \mu \circ T^{-1} }(- \sup I_{n-1}(p)) 
& < &
W_{- \infty ,0, \mu \circ T^{-1} }(- \sup I_{n-1}(q)) \nonumber \\
&=&
\int_{I_{n}(q)} | x - c_{\inf I_{n}(q) , \infty } |^{r} d \mu (x).
\label{inf_inp}
\end{eqnarray}
From (\ref{contrad}) and (G4) we deduce
\begin{equation}
\label{inp_cinf}
\int_{I_{n}(q)} | x - c_{\inf I_{n}(q) , \infty } |^{r} d \mu (x) 
< \int_{I_{n}(p)} | x - c_{\inf I_{n}(p) , \infty } |^{r} d \mu (x).
\end{equation}
Combining (\ref{inp_cinf}), (\ref{inf_inp}) and (\ref{equ_01}) we get the contradiction
\[ 
\int_{I_{n}(p)} | x - c_{\inf I_{n}(p) , \infty } |^{r} d \mu (x) < 
\int_{I_{n}(p)} | x - c_{\inf I_{n}(p) , \infty } |^{r} d \mu (x),
\]
which proves the assertion. 
\end{proof}

Now we can state and prove the main result of this section.

\begin{proposition}
\emph{\textbf{(Existence and Uniqueness of Gersho quantizers)}}
\label{ref_prop_nempt}
Let $r>1$. For any $n \in \mathbb{N}$ the set $\mathcal{G}(n,\mu, r)$ of
Gersho-quantizers is not empty. Moreover,
if the support of $\mu$ is an interval, then 
exactly one Gersho quantizer exists, i.e.
$\card ( \mathcal{G}(n,\mu, r) ) =1$.
\end{proposition}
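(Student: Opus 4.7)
The plan is to prove existence by an intermediate value argument on a one-parameter family of candidate quantizers, and to deduce uniqueness (when $\supp(\mu)$ is an interval) from Proposition \ref{lemm_equal} combined with the strict-monotonicity clause of Lemma \ref{lemm_ex_01}.

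For existence, fix $n \geq 2$ (the case $n=1$ follows from Proposition \ref{unique_1_quant} and (G3) since (G4) is then vacuous). Set $\alpha = \inf \supp(\mu)$, $\beta = \sup \supp(\mu)$. For $x \in (\alpha, \beta)$ I parametrize by the right endpoint of the first codecell: let $M(x) = W_{-\infty, 0, \mu}(x)$, $b_1(x) = x$, and inductively define $b_i(x) \in (b_{i-1}(x), \beta)$ for $i \geq 2$ as the unique value with $\int_{(b_{i-1}(x), b_i(x))} |y - c_{b_{i-1}(x), b_i(x)}|^{r} d\mu(y) = M(x)$ whenever such a value exists. Existence and uniqueness of $b_i(x)$ follow from the continuity and monotonicity of Lemma \ref{lemm_ex_01} applied with left endpoint $b_{i-1}(x)$. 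Let $X \subset (\alpha, \beta)$ be the open set on which $b_1(x) < \cdots < b_{n-1}(x) < \beta$, and on $X$ set
\[
F(x) = \int_{(b_{n-1}(x), \infty)} |y - c_{b_{n-1}(x), \infty}|^{r} d\mu(y) - M(x).
\]
A zero of $F$ immediately yields a quantizer satisfying (G1)--(G4): (G1) and (G2) are built in, (G3) holds because each codepoint was selected as the $1$-optimal point for its conditional measure, and (G4) is the equation $F = 0$. I would then verify: (a) $F$ is continuous on $X$, using Lemma \ref{lemm_ex_01} for continuity in the right endpoint and its reflected counterpart (apply the same lemma to $\mu \circ T^{-1}$, exactly as is done in the proof of Proposition \ref{lemm_equal}) for continuity in the left endpoint, composed inductively; (b) as $x \downarrow \alpha$ all $b_i(x) \downarrow \alpha$ and $M(x) \downarrow 0$, so $F(x) \to D_{1,r}(\mu) > 0$; (c) at the right boundary $x^{\ast} = \sup\{x : x \in X\}$, $b_{n-1}(x) \uparrow \beta$, so the tail moment tends to $0$ and $F(x) \to -M(x^{\ast}) < 0$. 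Then IVT delivers $x^{\dagger} \in X$ with $F(x^{\dagger}) = 0$.

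For uniqueness, assume $\supp(\mu)$ is an interval and let $q, q' \in \mathcal{G}(n, \mu, r)$. Proposition \ref{lemm_equal} yields $D(\mu, q, r) = D(\mu, q', r) = D_{n,r}^{G}(\mu)$, so (G4) forces every codecell of both quantizers to have the same $r$-th moment $M = D_{n,r}^{G}(\mu)/n$. Because $\mu$ charges every subinterval of its interval support, Lemma \ref{lemm_ex_01} makes $W_{-\infty, 0, \mu}$ strictly increasing on $(\alpha, \beta)$, so $W_{-\infty, 0, \mu}(\sup I_1(q)) = M = W_{-\infty, 0, \mu}(\sup I_1(q'))$ forces $\sup I_1(q) = \sup I_1(q')$. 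Proceeding by induction on the cell index, applying the analogous strict monotonicity of $W_{b_{i-1}, 0, \mu}$ (with the shared left endpoint $b_{i-1} = \sup I_{i-1}(q) = \sup I_{i-1}(q')$ supplied by the previous step), all boundary points coincide, so $q = q'$.

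The main obstacle I expect is step (c) in the existence argument, i.e.\ showing that as $x$ approaches the right boundary of $X$ the last boundary $b_{n-1}(x)$, rather than some earlier $b_{j}(x)$, is the one that drives the procedure to failure. I would address this by proving monotonicity of the construction in the parameter: each $b_i(x)$ is non-decreasing in $x$ on $X$ (argued inductively using Lemma \ref{lemm_ex_01} in both endpoints), and consequently the failure thresholds nest, $x^{\ast}_{1} \geq x^{\ast}_{2} \geq \cdots \geq x^{\ast}_{n-1} = x^{\ast}$, so the \emph{first} failure as $x \uparrow x^{\ast}$ is precisely $b_{n-1}(x) \uparrow \beta$. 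The secondary subtlety of continuous dependence of $c_{a,b}$ on its left endpoint is resolved by the reflection trick already used in Proposition \ref{lemm_equal}.
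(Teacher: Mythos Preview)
Your approach is correct and genuinely different in organisation from the paper's. The paper proceeds by induction on $n$: assuming an $(n-1)$-level Gersho quantizer exists for every conditional measure $\mu(\cdot\mid I_{z,\infty})$, it sets $\Phi_{n-1}(z)$ equal to the common per-cell moment of that quantizer and $\Psi_1(z)=W_{-\infty,0,\mu}(z)$, proves $\Phi_{n-1}$ is continuous and decreasing while $\Psi_1$ is continuous and increasing, and intersects them. Your direct shooting method (parametrise by the first boundary, propagate the $b_i(x)$ forward, and look for a sign change of the last-cell discrepancy $F$) avoids the inductive wrapper but in exchange forces you to analyse the boundary behaviour of all $b_i$ simultaneously; the paper's packaging of the tail into a single $\Phi_{n-1}$ makes its endpoint analysis shorter. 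Conversely, your uniqueness argument via Proposition~\ref{lemm_equal} is cleaner than the paper's, which re-uses the induction hypothesis to pin down the tail quantizer after fixing $z_0$.

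One small correction to your step~(c): the monotonicity of the $b_i(\cdot)$ gives $b_{n-1}(x)\uparrow L$ for some $L$, but in general (when $\supp(\mu)$ has gaps) you only get $\mu([L,\infty))=0$ rather than $L=\beta$. The argument is that if $\mu([L,\infty))>0$ then, by continuity of $W$ in both endpoints, all $b_i(x^\ast)$ would be defined and $<\beta$, so one could extend slightly past $x^\ast$, contradicting $x^\ast=\sup X$. Either way the tail moment from $b_{n-1}(x)$ tends to $0$, and $F(x)\to -M(x^\ast)<0$ as you claim. With that adjustment your outline goes through.
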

\begin{proof}
We proceed by induction.
Assume first that $n=1$. 
In this case the assertion follows directly from Proposition \ref{unique_1_quant}.
Now assume that for every $z \in V = \{ w \in \mathbb{R} : \mu ( I_{w, \infty} ) \in (0,1) \}$
the set $\mathcal{G}(n-1,\mu ( \cdot | I_{z, \infty} ), r)$ is not empty and that
$\card ( \mathcal{G}(n-1,\mu ( \cdot | I_{z, \infty} ), r) ) =1$ if the support 
of $\mu ( \cdot | I_{z, \infty} )$ is an interval. 
Let us consider the mapping 
\begin{equation}
\label{phi_def}
\Phi_{n-1}(z) 
= \frac{ \mu ( I_{z, \infty} ) D_{n-1,r}^{G}(\mu ( \cdot | I_{z, \infty} ) )}{n-1}.
\end{equation}
The idea of the proof is quite straightforward.
First, one constructs a quantizer which consists of $n$ cells, where the leftmost cell 
ends at $z$ and all other cells are designed such that every moment of one of these
$n-1$ cells equals $\Phi_{n-1}(z)$. 
Using the induction hypothesis it suffices to show that only one $z_{0}$ exists, such that
the moment of the leftmost cell and $\Phi_{n-1}(z_{0})$ coincide. 
Indeed, this can be achieved by showing that $\Phi_{n-1}(\cdot)$ is decreasing and
continuous.
We divide the remaining part of the proof into several steps. \smallskip \\
1. We construct quantizers $q_{n,z}$ and $\tilde{q}_{n,z}$
and show that $\tilde{q}_{n,z} \in \mathcal{G}(n-1,\mu ( \cdot | I_{z, \infty} ), r)$. 
We define for every $z \in V$ recursively 
\begin{eqnarray*}
&& w_{0}(z) = z , \qquad w_{n-1}(z) = \infty 
\end{eqnarray*}
and
\begin{eqnarray*}
w_{i}(z) = \inf \left\{ w > w_{i-1}(z) : 
\int_{[w_{i-1}(z),w]} | x - c_{w_{i-1}(z),w} |^{r} d \mu (x) = 
\Phi_{n-1}(z)   \right\} .
\end{eqnarray*}
for every $i \in \{ 1, \dots , n-2 \}$. 
Let $p_{z} \in \mathcal{G}(n-1,\mu ( \cdot | I_{z, \infty} ), r)$ and
denote by $I_{1}(p_{z}),\dots , I_{n-1}(p_{z})$ the codecells of $p_{z}$ 
in increasing order.
Note that 
\[
D_{n-1,r}^{G}(\mu ( \cdot | I_{z, \infty} ) ) = D( \mu ( \cdot | I_{z, \infty} ) , p_{z} , r )
\]
according to Proposition \ref{lemm_equal}.
Thus we deduce by (G4) and (\ref{phi_def}) that 
\[
\Phi_{n-1}(z) = \int_{[z,\sup I_{1}(p_{z} )]} | x - c_{z,\sup I_{1}(p_{z} )} |^{r} d \mu (x).
\]
Hence, the definition of $w_{1}(z)$ implies 
$w_{1}(z) \leq \sup I_{1}(p_{z})$. 
Moreover, 
\[
\mu ( [ w_{1}(z), \sup I_{1}(p_{z} ) ] ) = 0 ,
\]
because otherwise we would get together with Lemma \ref{lemm_ex_01} that
\begin{eqnarray*}
\frac{\mu ( I_{z, \infty} ) D( \mu ( \cdot | I_{z, \infty} ) , p_{z} , r )}{n-1}
&=& \Phi_{n-1}(z) \\
&=& \int_{[z , w_{1}(z)]} | x - c_{z, w_{1}(z)} |^{r} d \mu (x) \\
&<& \int_{[z, \sup I_{1}(p_{z})]} | x - p_{z}(x) |^{r} d \mu (x) ,
\end{eqnarray*}
which would contradict $p_{z} \in \mathcal{G}(n-1,\mu ( \cdot | I_{z, \infty} ), r)$.
Hence we obtain inductively that 
\begin{equation}
\nonumber
\mu ( [ w_{i}(z), \sup I_{i}(p_{z})]  ) = 0 \text{ for every } i \in \{ 1,..,n-2 \},
\end{equation}
which implies together with (G3), (G4) and (\ref{aibi}) that
\begin{eqnarray}
&& \int_{[w_{n-2}(z),w_{n-1}(z) )} |x-c_{w_{n-2}(z),w_{n-1}(z)}|^{r} d \mu (x)  \nonumber \\
&=&
\int_{I_{n-1}(p_{z})} |x-p_{z}(x)|^{r} d \mu (x) = \Phi_{n-1}(z) > 0.
\label{wnm2sdf}
\end{eqnarray}
From (\ref{wnm2sdf}) we know that $\mu ( [ w_{n-2}(z), w_{n-1}(z) ] ) > 0$. Moreover, the
definition of $w_{i}$ and Lemma \ref{lemm_ex_01} imply for $i \in \{ 1,\dots,n-2 \}$ that
\begin{equation}
\label{wiz_wiz}
\int_{[w_{i-1}(z), w_{i}(z)]} | x - c_{w_{i-1}(z), w_{i}(z)} |^{r} d \mu (x) = \Phi_{n-1}(z) > 0, 
\end{equation}
which yields $\mu ( [ w_{i-1}(z), w_{i}(z) ] ) > 0$ 
for every $i \in \{ 1,\dots,n-2 \}$. Thus we can define for $x \in \mathbb{R}$ the quantizers 
\[
q_{n,z}(x) = c_{- \infty, z} 1_{(- \infty, z )}(x)+
\sum_{i=0}^{n-2} c_{w_{i}(z),w_{i+1}(z)} 1_{[w_{i}(z), w_{i+1}(z))}(x) 
\]
and
\[
\tilde{q}_{n,z}(x) = c_{- \infty, w_{1}(z)} 1_{(- \infty, w_{1}(z) )}(x)+
\sum_{i=1}^{n-2} c_{w_{i}(z),w_{i+1}(z)} 1_{[w_{i}(z), w_{i+1}(z))}(x) .
\]
The definition of $q_{n,z}$ and $\tilde{q}_{n,z}$ imply together with
(\ref{wnm2sdf}) and (\ref{wiz_wiz}) that
\[
\tilde{q}_{n,z} \in \mathcal{G}(n-1,\mu ( \cdot | I_{z, \infty} ), r) .
\]
We will show that $\Phi_{n-1}( \cdot )$ is decreasing and continuous. \smallskip \\ 
2. We will show that $\Phi_{n-1}(\cdot)$ is decreasing. \smallskip \\
Let $\delta > 0$ such that $z + \delta < w_{1}(z)$
and $\mu ( [ z + \delta , w_{1}(z) ] ) > 0$. If 
$\mu ( [ z, z + \delta  ] ) = 0$, then definition (\ref{phi_def}) implies
$\Phi_{n-1}( z ) = \Phi_{n-1}( z + \delta )$. Hence we can assume w.l.o.g. that
$\mu ( [ z, z + \delta  ] ) > 0$. Thus we get from Lemma \ref{lemm_ex_01}
\begin{eqnarray}
\Phi_{n-1}(z) 
&=& \int_{ [z , w_{1}(z)] } | x - c_{z  , w_{1}(z)} |^{r} d \mu (x) \nonumber \\
&>& \int_{ [z + \delta , w_{1}(z)] } | x - c_{z  , w_{1}(z)} |^{r} d \mu (x) \nonumber \\
&\geq & \int_{ [z + \delta , w_{1}(z)] } | x - c_{z + \delta , w_{1}(z)} |^{r} d \mu (x) > 0.
\label{mon_Delt_v}
\end{eqnarray}
Now let us assume that 
\begin{equation}
\label{phinmv}
\Phi_{n-1}( z ) < \Phi_{n-1}( z + \delta ) .
\end{equation}
In this case
we obtain
\begin{eqnarray}
&& \int_{[z + \delta , w_{1}(z + \delta ) ]} | x - c_{z + \delta , w_{1}(z  )} |^{r} d \mu (x)  \nonumber \\
& \geq &
\int_{[z + \delta , w_{1}(z + \delta ) ]} | x - c_{z + \delta , w_{1}(z + \delta )} |^{r} d \mu (x) \nonumber  \\
&=& \Phi_{n-1}( z + \delta ) >  \Phi_{n-1}( z ).
\label{nm1phi}
\end{eqnarray}
Combining (\ref{mon_Delt_v}) and (\ref{nm1phi}) we get
\[
\int_{[z + \delta , w_{1}(z + \delta ) ]} | x - c_{z + \delta , w_{1}(z  )} |^{r} d \mu (x)
>
\int_{ [z + \delta , w_{1}(z)] } | x - c_{z + \delta , w_{1}(z)} |^{r} d \mu (x),
\]
which implies that $w_{1}(z) < w_{1}(z + \delta )$ and
$\mu ( [ w_{1}(z), w_{1}(z + \delta ) ] ) > 0$.
Inductively one obtains that $\mu ( [ w_{n-2}(z), w_{n-2}(z + \delta ) ] ) > 0$, yielding that
\begin{eqnarray*}
\Phi_{n-1}( z + \delta ) 
&=& \int_{[w_{n-2}(z + \delta ) , \infty )} | x - c_{w_{n-2}(z + \delta ) , \infty } |^{r} d \mu (x)   \\ 
&<&
\int_{[w_{n-2}(z ) , \infty )} | x - c_{w_{n-2}(z ) , \infty } |^{r} d \mu (x)  = \Phi_{n-1}( z ) ,
\end{eqnarray*}
which contradicts assumption (\ref{phinmv}). Thus we have proved that $\Phi_{n-1}( \cdot )$ is decreasing.
\smallskip \\
3. We will show that $\Phi_{n-1}(\cdot)$ is continuous. \smallskip \\
Note that $w_{1}(\cdot )$ is increasing. Indeed, 
Lemma \ref{lemm_ex_01} and Step 2 yields
\begin{eqnarray*}
\int_{[z , w_{1}(z + \delta )]} | x - c_{z, w_{1}(z + \delta )} |^{r} d \mu (x) 
&=& \Phi_{n-1}(z + \delta ) \leq \Phi_{n-1}(z) \\
&=& \int_{[z , w_{1}(z )]} | x - c_{z, w_{1}(z )} |^{r} d \mu (x) \\
& \leq &
\int_{[z , w_{1}(z )]} | x - c_{z, w_{1}(z + \delta )} |^{r} d \mu (x),
\end{eqnarray*}
implying $w_{1}(z + \delta ) \leq w_{1}(z )$.
Hence, $w_{1}(\cdot )$ is increasing and
consequently we get
\begin{eqnarray*}
\Phi_{n-1}( z + \delta  ) &=&  \int_{[z + \delta , w_{1}(z + \delta ) ]} 
| x - c_{z + \delta , w_{1}(z + \delta )} |^{r} d \mu (x) \\
& \geq &
\int_{[z + \delta , w_{1}(z  ) ]} | x - c_{z + \delta , w_{1}(z  )} |^{r} d \mu (x),
\end{eqnarray*}
which implies
\begin{eqnarray*}
0 &\leq & \Phi_{n-1}( z ) - \Phi_{n-1}( z + \delta  )  \\
& \leq &  
\int_{[z  , w_{1}(z  ) ]} | x - c_{z  , w_{1}(z  )} |^{r} d \mu (x) -
\int_{[z + \delta , w_{1}(z  ) ]} | x - c_{z + \delta , w_{1}(z  )} |^{r} d \mu (x) \\
& \leq &  
\int_{[z  , w_{1}(z  ) ]} | x - c_{z + \delta , w_{1}(z  )} |^{r} d \mu (x) -
\int_{[z + \delta , w_{1}(z  ) ]} | x - c_{z + \delta , w_{1}(z  )} |^{r} d \mu (x) \\
& \leq & \mu ( [z  , z + \delta ] ) | z - w_{1}(z )|^{r}.
\end{eqnarray*}
Because the right hand side tends to $0$ as $\delta \to 0$ we obtain that
$\Phi_{n-1}( \cdot )$ is continuous. \smallskip \\
4. Rest of the proof. \smallskip \\
Now we consider the mapping 
\[
\Psi_{1}(z) = 
W_{- \infty , 0, \mu }(z) = \int_{(- \infty, z)} | x - c_{-\infty, z}(\mu ) |^{r} d \mu (x) .
\]
From Lemma \ref{lemm_ex_01} we know that $\Psi_{1}( \cdot )$ is increasing and continuous.
Moreover, $\lim_{z \to - \infty}\Psi_{1}(z) = 0$ and $\lim_{z \to  \infty}\Psi_{1}(z) = D_{1,r}(\mu ) > 0$.
On the other hand we know from step 2 and 3 that $\Phi_{n-1}( \cdot )$ is decreasing and continuous.
Moreover, $\lim_{z \to - \infty} \Phi_{n-1}( z ) > 0$ and 
$\lim_{z \to  \infty} \Phi_{n-1}( z ) = 0$.
Hence a $z_{0} \in V$ exists, such that $\Phi_{n-1}( z_{0} ) = \Psi_{1}(z_{0})$.
It is easy to check that $q_{n,z_{0}} \in \mathcal{G}(n,\mu , r)$.
Finally, assume that the support of $\mu$ is an interval. 
By Lemma \ref{lemm_ex_01} the mapping $\Psi_{1}(\cdot)$ is strictly increasing in this case.
Hence, exactly one $z_{0} \in \mathbb{R}$ exists, such that $\Phi_{n-1}( z_{0} ) = \Psi_{1}(z_{0})$.
Moreover, we know by assumption that 
\[
\card ( \mathcal{G}(n-1,\mu ( \cdot | I_{z_{0}, \infty} ), r) ) =1 .
\]
From $\tilde{q}_{n,z_{0}} \in \mathcal{G}(n-1,\mu ( \cdot | I_{z, \infty} ), r)$ and by
the definition of $q_{n,z_{0}}$ we deduce, that $q_{n,z_{0}}$ is the only optimal $n-$th Gersho 
quantizer for $\mu$, i.e. $\card ( \mathcal{G}(n,\mu , r) ) =1$, which finishes the proof.
\end{proof}

\begin{remark}  
If $\mu$ is the uniform distribution on 
\[
[-1,-1/2] \cup [1/2, 1],
\]
then $\mathcal{G}(2,\mu, r)$ is not countable, because 
\[
q_{\varepsilon}(\cdot ) = 
-\frac{3}{4} \cdot 1_{(-\infty,-1/2 + \varepsilon )}(\cdot ) + 
\frac{3}{4} \cdot 1_{[-1/2 + \varepsilon, \infty )}(\cdot )
\]
is a Gersho quantizer for every $\varepsilon \in (0,1)$.
Let us identify two Gersho quantizers as equal if they have the same
codebook and for every codepoint the codecells are $\mu$-a.s. identical.
Insofar, we recognize that the quantizers $q_{\varepsilon}(\cdot )$ are all equal. 
Now it remains open, if a non-atomic distribution $\mu$ 
exists, where for some fixed $n$ and $r$ 
\begin{itemize}
\item[(a)]
at least two 
Gersho quantizers exist which are not equal but have the same codebook or 
\item[(b)]
at least two 
Gersho quantizers exist which have not the same codebook. 
\end{itemize}
Moreover,
it remains open to characterize those non-atomic scalar distributions where
for every $n$ and $r$ the corresponding Gersho quantizers are all equal. 
\end{remark}

\begin{remark}
\label{rem_mon_gerr}
Looking again at the proof of Proposition \ref{ref_prop_nempt}
we observe that $F(z) \to D_{n-1,r}^{G}(\mu )$ as $z \to - \infty$ and 
$F(z_{0}) = D_{n,r}^{G}(\mu )$, where the mapping $F$ is defined as
\[
(- \infty , z_{0} ] \ni z \to F(z)=\Psi_{1}(z)+(n-1)\Phi_{n-1}(z) .
\]
Unfortunately, the author was not able to prove or disprove the 
monotonicity of $F$. If $F$ would be decreasing, then
the overall distortion would be decreasing, i.e.
\begin{equation}
\label{strict_mon_ov_dist}
D_{n-1,r}^{G}(\mu ) \geq D_{n,r}^{G}(\mu )
\end{equation}
would be true.
Clearly, $D_{n,r}^{G}(\mu ) \geq D_{2n,r}^{G}(\mu )$, but it remains 
open if (\ref{strict_mon_ov_dist}) is true. 
Nevertheless, we will prove the weaker result that the distortion per
codecell is decreasing (cf. Lemma \ref{mom_mon} (b)).
\end{remark}

\begin{remark}
\label{rem_bisec}
Assume that the support of $\mu$ is a (possibly unbounded) interval
and denote by $I$ the interior of the support, which is an open interval.
Now we consider the mapping
\begin{eqnarray*}
I \ni x \to V(x) &=& \int_{(- \infty, x)} |z - c_{-\infty, x}|^{r} d \mu (z) 
- \int_{(x, \infty )} |z - c_{x, \infty}|^{r} d \mu (z) \\
&=& W_{-\infty, 0, \mu}(x) - W_{-\infty, 0, \mu \circ T^{-1}}(-x).
\end{eqnarray*}
From Lemma \ref{lemm_ex_01} we know that $V(\cdot)$ is continuous and strictly increasing.  
Let $x_{0} \in I$ be the unique point with $V(x_{0})=0$.
Clearly, $x_{0}$ is equal to the right resp. left boundary point of the codecells of
the unique $2$-level Gersho quantizer for $\mu$.
As sketched in the introduction, it is of practical relevance to determine 
$x_{0}$ by a bisection algorithm. If $r=2$, then it is well-known (cf. \cite[Example 2.3(b)]{GrLu00})
that $c_{-\infty, x}(\mu )$ equals the expected value of $\mu (  \cdot | (-\infty, x])$ for every $x \in I$.
Additionally, if $\mu$ has a continuous Lebesgue density, then
$V(\cdot)$ becomes differentiable on $I$ and one can use even a Newton type method for
the numerical determination of $x_{0}$.
\end{remark}

\section{High-rate quantization of scalar distributions with Gersho-quantizers}

We denote by $\lambda$ the Lebesgue measure on $\mathbb{R}$.
As already mentioned in the introduction, the asymptotic behavior of optimal quantizers 
- even in higher dimensions - is well-known. For scalar distributions the following holds.
Let $\mu = \mu_{a} + \mu_{s}$ be the Lebesgue decomposition of $\mu$ with respect to
$\lambda$, where $\mu_{a} = h_{a} \lambda$ with Lebesgue density $h_{a}$. 

\begin{theorem}[\cite{Za63, BuWi82, GrLu00}]
\label{stdres}
Assume that $\mu_{a}$ does not vanish and that 
\begin{equation}
\label{xrdetz}
\int |x|^{r + \delta} d \mu (x) < \infty
\end{equation}
for some $\delta > 0$. Then
\begin{equation}
\label{asympform}
n^{r} D_{n,r }(\mu ) \to 2^{-r}(1+r)^{-1} 
\left( \int h_{a}^{1/(1+r)} d \lambda \right)^{1+r} \qquad \text{if } n \to \infty .
\end{equation}
\end{theorem}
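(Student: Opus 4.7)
Write $K_r = 2^{-r}(1+r)^{-1}$ and $\eta_r = \bigl(\int h_a^{1/(1+r)}\, d\lambda\bigr)^{1+r}$, so the target is $n^r D_{n,r}(\mu) \to K_r \eta_r$. The plan is to prove matching $\liminf$ and $\limsup$ bounds, reducing both to the exact asymptotics on a bounded interval for the uniform distribution. A direct computation on $[a,b]$ with the equidistant codebook $\{a + (2i-1)(b-a)/(2n) : i=1,\dots,n\}$ gives $D_{n,r}(\lambda|_{[a,b]}) = K_r (b-a)^{1+r}/n^r$ exactly, with optimality following from the strict convexity argument of Proposition \ref{unique_1_quant}; this is the basic building block.

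For the upper bound I first assume $h_a$ continuous and compactly supported. I partition the support into intervals $I_1,\dots,I_N$ on which $h_a$ oscillates by less than $\varepsilon$, set $h_j = \sup_{I_j} h_a$, and allocate to $I_j$ roughly $n_j = \lceil n\, h_j^{1/(1+r)}|I_j|/\sum_k h_k^{1/(1+r)}|I_k|\rceil$ equispaced codepoints. Summing the baseline distortions gives
\[
\limsup_{n\to\infty} n^r D(\mu_a, q_n, r) \le K_r \Bigl(\sum_j h_j^{1/(1+r)} |I_j|\Bigr)^{1+r},
\]
and refining the partition together with $\varepsilon \downarrow 0$ yields $K_r \eta_r$. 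A general $h_a$ is handled by truncation to $[-M,M]$ and mollification, the tail contribution to $\eta_r$ being controlled by (\ref{xrdetz}) and Hölder. Passing from $\mu_a$ to $\mu$ requires that the same codebooks also produce $o(n^{-r})$ error against $\mu_s$; this follows from the classical fact that $n^r D_{n,r}(\nu) \to 0$ for any compactly supported $\nu$ singular with respect to $\lambda$, combined once more with truncation via (\ref{xrdetz}).

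For the lower bound the key local inequality is that, for any Borel $B \subset \mathbb{R}$ and any $n$-level quantizer $q$,
\[
\int_B |x-q(x)|^r\, d\mu(x) \ge D_{n_B+2,\,r}(\mu_a|_B),
\]
where $n_B$ counts the codepoints of $q$ lying in $B$ (the additive $2$ allows for at most two external codepoints whose Voronoi cells may protrude into $B$). Applied on the same partition $\{I_j\}$, with $h_a$ replaced on $I_j$ by its infimum $h_j^-$ and the baseline invoked, this yields $\int_{I_j}|x-q(x)|^r\, d\mu \ge h_j^- K_r |I_j|^{1+r}(n_j+2)^{-r}$. Minimising the sum over $n_1+\cdots+n_N \le n$ by Hölder's inequality with exponents $(1+r)/r$ and $1+r$ gives
\[
n^r D_{n,r}(\mu) \ge (1-o(1))\, K_r \Bigl(\sum_j (h_j^-)^{1/(1+r)}|I_j|\Bigr)^{1+r},
\]
which tends to $K_r \eta_r$ as the partition refines and $\varepsilon \downarrow 0$.

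The main obstacles are twofold. First, controlling $\mu_s$ and the unbounded tails in the upper bound: one must verify that asymptotically optimal quantizers for $\mu_a$ are simultaneously asymptotically optimal for $\mu$, which rests on the moment assumption (\ref{xrdetz}) together with the vanishing $n^r D_{n,r}(\mu_s) \to 0$ for the singular part. Second, the Hölder optimisation in the lower bound, which sharply selects both the exponent $1+r$ and the optimal point density proportional to $h_a^{1/(1+r)}$, and thereby isolates the precise Zador constant $K_r\eta_r$; once these two ingredients are secured, the sandwich of upper and lower bounds closes and the theorem follows.
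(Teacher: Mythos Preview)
The paper does not prove Theorem~\ref{stdres} at all; it is quoted as a classical result with references to Zador, Bucklew--Wise, and Graf--Luschgy, and is used as a black box in the rest of the paper (indeed, the paper even remarks after Definition~\ref{def_weakly} that the piecewise-constant approximation technique underlying the classical proof is \emph{not} usable for its own main results on Gersho quantizers). So there is no ``paper's own proof'' to compare against; your sketch is essentially the standard argument one finds in \cite{GrLu00}, Chapter~6, and in \cite{BuWi82}: exact computation for the uniform distribution, piecewise-constant upper bound with codepoint allocation proportional to $h_a^{1/(1+r)}$, and a lower bound via localisation plus the H\"older optimisation that pins down the Zador constant.

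One genuine imprecision in your upper bound is worth flagging. You write that ``the same codebooks also produce $o(n^{-r})$ error against $\mu_s$'' and justify this by the fact that $n^r D_{n,r}(\nu)\to 0$ for compactly supported singular $\nu$. But that fact concerns the \emph{optimal} $n$-level error for $\nu$, not the error of a codebook tailored to $\mu_a$; a codebook with mesh of order $1/n$ on a compact interval gives only $O(n^{-r})$, not $o(n^{-r})$, against an arbitrary measure supported there. The standard repair (and what is actually done in \cite{GrLu00}) is to \emph{split} the budget: spend $(1-\varepsilon)n$ points on $\mu_a$ restricted to a large compact, and reserve $\varepsilon n$ points for $\mu_s$ and the tails, so that the singular and tail contributions are bounded by $(\varepsilon n)^{-r}$ times quantities that vanish by the singular-measure fact and by the moment condition~(\ref{xrdetz}); then let $\varepsilon\downarrow 0$ after $n\to\infty$. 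With that adjustment your outline is correct.
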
  

Because we assume throughout this section that $\mu$ is absolutely continuous
with respect to $\lambda$, we make the 
following definition with $h$ as the Lebesgue density of $\mu$.
Let $Q(r)=2^{-r}(1+r)^{-1}$. If  
$ \int h^{1/(1+r)} d \lambda  < \infty$, then we define the constant
\[
C_{0} = Q(r) \left( \int h^{1/(1+r)} d \lambda \right)^{1+r}.
\]

\begin{remark}
\label{implfin}
Note that $C_{0} < \infty$ follows from (\ref{xrdetz}) by applying 
a H\"{o}lder argument, see \cite[Remark 6.3(a)]{GrLu00}.
\end{remark}

In view of (\ref{asympform}) one is interested in quantizer sequences
where the quantization error converges to $C_{0}$ as the 
quantization level tends to infinity.
To this end the following definition makes sense.

\begin{definition}
\label{defc0kj}
Assume that $C_{0} < \infty $.
We call a sequence $(q_{n})_{n \in \mathbb{N}}$ of quantizers asymptotically optimal if
$\card ( q_{n}(\mathbb{R}) ) \leq n$ and
\[
n^{r} D( \mu , q_{n} , r ) \to C_{0}
\]
for $n \to \infty $.
\end{definition}

\begin{remark}
Another definition of asymptotic optimality would be 
\[
\frac{D(\mu , q_{n} , r)}{D_{n,r}(\mu )} \to 1
\]
as $n \to \infty$, which is equivalent to Definition \ref{defc0kj} if 
(\ref{xrdetz}) is satisfied.
Slightly different from other definitions (cf. \cite[p.93]{GrLu00}), we require
in Definition (\ref{defc0kj}) only the finiteness of $C_{0}$, which is 
by \cite[Remark 6.3(b)]{GrLu00} weaker than condition (\ref{xrdetz}).
Clearly, if $(q_{n})_{n \in \mathbb{N}}$ is asymptotically optimal, then
$D( \mu , q_{n} , r ) \to 0$ as $n \to \infty$. 
\end{remark}

Now it is natural to ask if Gersho quantizers are asymptotically optimal.
As a main result of this paper (cf. Theorem \ref{main_theo}) we will prove in this section, that the answer is
positive at least for distributions with a so-called weakly unimodal density.

\begin{definition}
\label{def_weakly}
We call a probability density function $h$ weakly unimodal,
if $h$ is continuous on its support and there exists an $l_{0}> 0$ such that
$\{ x : h(x) \geq l \}$ is a compact interval for every $l \in (0, l_{0})$. 
\end{definition}

\begin{remark}
Many scalar probability densities are weakly unimodal,
e.g. the ones of Gaussian, Laplace or exponential distributions. 
Clearly, every distribution $\mu$ with a weakly unimodal density has
an (possibly unbounded) interval as support. Hence,
Proposition \ref{ref_prop_nempt} implies $\card ( \mathcal{G}(n , \mu , r) ) = 1$
if $r>1$. 
\end{remark}

In the proof of Theorem \ref{stdres} the assertion is proved first for 
distributions with piecewise constant density. Then the general case is 
proved by an approximation argument.
Unfortunately, we cannot use this technique for Gersho quantizers, because 
any amendments of the density will destroy property (G4) in general. 
Therefore, we use a different approach in this paper. 
We split the quantization error into the contribution of the codecells which
are lying inside a compact interval and the contribution of the remaining ones.
The asymptotic behavior inside the interval can be determined by  
the uniform continuity of $h$ on compact intervals. The number and the
contribution to the error of the codecells which are outside the interval can
be controlled by (G4).  

Let $- \infty < u < v < \infty$ and $I=[u,v]$. Assume that $\mu (I) > 0$.
For every $n \in \mathbb{N}$ let
$n_{2}^{I}(n)$ be the number of codepoints whose codecell is located outside $(u,v)$, i.e.
\begin{equation}
\label{n2in}
n_{2}^{I}(n) = \card ( \{ c \in q_{n}(\mathbb{R}) : q_{n}^{-1}(c) \subset \mathbb{R} \backslash (u,v) \} ).
\end{equation}
It should be noted at this point that $n_{1}^{I}(n)$ will be defined and used later, cf. (\ref{jni_Def}).

The following result shows that $n_{2}^{I}( \cdot )$ is increasing.
By doing this, we first prove that the distortion per codecell is decreasing (cf. Lemma \ref{mom_mon} (b)). 
As already mentioned in Remark \ref{rem_mon_gerr} this result is weaker than 
relation (\ref{strict_mon_ov_dist}).

\begin{lemma}
\label{mom_mon}
Let $\mu$ be weakly unimodal, $r>1$ 
and $(q_{n})_{n \in \mathbb{N}}$ the sequence of $n$-level Gersho quantizers for $\mu$.
Then, 
\begin{itemize}
\item[(a)] 
$n_{2}^{I}(n+1) \geq n_{2}^{I}(n)$ and
\item[(b)] 
$\frac{D(\mu , q_{n}, r)}{n} \geq \frac{D(\mu , q_{n+1}, r)}{n+1}$
for every $n \in \mathbb{N}$.
\end{itemize}
\end{lemma}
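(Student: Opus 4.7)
The plan is to prove (b) first by induction on $n$ and then derive (a) from (b) by showing that each internal boundary of the Gersho quantizer is non-increasing in $n$.

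For (b), the base case $n=1$ reduces to $D_{2,r}^{G}(\mu) \leq 2 D_{1,r}(\mu)$. From Proposition \ref{ref_prop_nempt} applied at level $2$ one has $\tfrac{1}{2}D_{2,r}^{G}(\mu) = \Psi_{1}(z_{0}^{(2)})$ with $\Psi_{1}(z) = \int_{(-\infty,z)} |x - c_{-\infty,z}|^{r} d\mu$. Since $c_{-\infty,z}$ is $1$-optimal for $\mu(\cdot|(-\infty,z))$, replacing it by the unique $1$-optimal codepoint $c^{*}$ of $\mu$ (Proposition \ref{unique_1_quant}) only increases the integral, so $\Psi_{1}(z) \leq \int_{\mathbb{R}} |x - c^{*}|^{r} d\mu = D_{1,r}(\mu)$. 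For the inductive step I would assume that $\varphi_{k}(\nu) := D_{k,r}^{G}(\nu)/k$ is non-increasing in $k$ up to level $n$ for every weakly unimodal probability measure $\nu$ with interval support; this applies in particular to $\nu = \mu(\cdot|I_{z,\infty})$, which is again weakly unimodal for every $z$ in the interior of $\supp\mu$. Writing $\Phi_{m}(z) = \mu(I_{z,\infty}) \varphi_{m}(\mu(\cdot|I_{z,\infty}))$ as in the proof of Proposition \ref{ref_prop_nempt}, the hypothesis yields $\Phi_{n}(z) \leq \Phi_{n-1}(z)$ pointwise. At the unique intersection $z_{0}^{(n)}$ of $\Psi_{1}$ and $\Phi_{n-1}$ this gives $\Phi_{n}(z_{0}^{(n)}) \leq \Psi_{1}(z_{0}^{(n)})$; since $\Psi_{1}$ is increasing and $\Phi_{n}$ is decreasing, their intersection $z_{0}^{(n+1)}$ satisfies $z_{0}^{(n+1)} \leq z_{0}^{(n)}$, whence $\varphi_{n+1}(\mu) = \Psi_{1}(z_{0}^{(n+1)}) \leq \Psi_{1}(z_{0}^{(n)}) = \varphi_{n}(\mu)$.

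For (a), decompose $n_{2}^{I}(n) = N_{L}(n) + N_{R}(n)$, where $N_{L}(n)$ and $N_{R}(n)$ count cells of $q_{n}$ contained in $(-\infty,u]$ and $[v,\infty)$ respectively, and denote the internal boundaries of $q_{n}$ by $b_{1}^{(n)} < \cdots < b_{n-1}^{(n)}$, so that $N_{L}(n) = \max\{k : b_{k}^{(n)} \leq u\}$. It suffices to show $b_{k}^{(n+1)} \leq b_{k}^{(n)}$ for every $k \in \{1,\ldots,n-1\}$: this gives $N_{L}(n+1) \geq N_{L}(n)$ at once, and the analogous statement applied to the reflected measure $\mu \circ T^{-1}$ (still weakly unimodal) combined with the fact that the Gersho quantizer of $\mu \circ T^{-1}$ is the reflection of $q_{n}$ (by uniqueness) yields $N_{R}(n+1) \geq N_{R}(n)$. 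The leftmost $k$ cells of $q_{n}$ form, by (G1)--(G4), the unique $k$-level Gersho quantizer of $\mu(\cdot|(-\infty, b_{k}^{(n)}])$ with per-cell $\mu$-moment $\varphi_{n}$, so $b_{k}^{(n)}$ is characterized by $\tilde\Psi_{k}(b_{k}^{(n)}) = \varphi_{n}$, where $\tilde\Psi_{k}(z)$ denotes the per-cell $\mu$-moment of the $k$-level Gersho of $\mu|_{(-\infty,z]}$. Combining (b) with the monotonicity of $\tilde\Psi_{k}$ in $z$ then yields $b_{k}^{(n+1)} \leq b_{k}^{(n)}$.

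The main obstacle is proving that $\tilde\Psi_{k}$ is non-decreasing; Gersho quantizers are not distortion-minimizing in general, so a naive restriction argument fails. I would handle this by induction on $k$ via the following reparametrization: fix a candidate per-cell $\mu$-moment $\varphi$ and define $z_{0}(\varphi) = \inf\supp\mu$ and $z_{i}(\varphi) = \hat\beta(z_{i-1}(\varphi), \varphi)$, where $\hat\beta(a,\varphi)$ is the unique $b > a$ with $\int_{[a,b]} |x - c_{a,b}|^{r} d\mu = \varphi$; then $\tilde\Psi_{k}(z_{k}(\varphi)) = \varphi$. Lemma \ref{lemm_ex_01} (and its mirror for extension of the left endpoint, obtained by applying the same argument to $\mu \circ T^{-1}$) shows that $\hat\beta$ is non-decreasing in both arguments, so inductively $z_{k}(\varphi)$ is non-decreasing in $\varphi$; inversion yields $\tilde\Psi_{k} = z_{k}^{-1}$ non-decreasing in $z$.
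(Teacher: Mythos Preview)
Your proof is correct, but for part (b) it follows a genuinely different route from the paper. The paper argues directly by contradiction for each fixed $n$: assuming $\varphi_n < \varphi_{n+1}$, it uses (G4) and Lemma~\ref{lemm_ex_01} from the left to force $\sup q_n^{-1}(c_i) < \sup q_{n+1}^{-1}(d_i)$ for $i=1,\dots,n-1$, and then the same tools from the right to force $\sup q_{n+1}^{-1}(d_n) < \sup q_n^{-1}(c_{n-1})$, a contradiction. This is entirely self-contained at level $n$ and uses nothing beyond (G3), (G4) and Lemma~\ref{lemm_ex_01}; in particular it does not need the observation that restrictions $\mu(\cdot\mid I_{z,\infty})$ remain weakly unimodal. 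Your approach instead leverages the recursive construction from Proposition~\ref{ref_prop_nempt}: you induct on $n$ simultaneously over all weakly unimodal measures, use the hypothesis to compare $\Phi_{n}$ with $\Phi_{n-1}$ pointwise, and read off the inequality from the shift of the intersection point $z_0$. This is more structural and makes explicit that (b) is really a statement about the family of all restrictions, at the cost of carrying a stronger induction hypothesis.

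For part (a) the two proofs are essentially the same idea in different clothing. The paper proves $b_k^{(n+1)}\le b_k^{(n)}$ by a direct induction on the cell index $k$, using at each step that shrinking either endpoint (Lemma~\ref{lemm_ex_01} and its reflection) lowers the cell moment. Your functions $\hat\beta$ and $z_k(\varphi)$ encode exactly this two-variable monotonicity, and inverting $z_k$ to get $\tilde\Psi_k$ non-decreasing is precisely the paper's induction rewritten as a parametrization by the per-cell moment. Your packaging is a bit heavier but has the minor advantage of isolating ``$\tilde\Psi_k$ is increasing'' as a reusable statement.
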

\begin{proof}
First we will prove assertion (b). \smallskip \\
If $n=1$, then let $I_{1}(q_{2})$ and $I_{2}(q_{2})$  
in increasing order.
Applying (\ref{aibi}) and (G3) we calculate
\begin{eqnarray*}
&& D( \mu , q_{1} , r ) \\
&=& \int | x - q_{1}(x) |^{r} d \mu (x) \\
&=&
\int_{(- \infty , \sup I_{1}(q_{2}))} | x - q_{1}(x) |^{r} d \mu (x) +
\int_{[ \sup I_{1}(q_{2}) ,  \infty )} | x - q_{1}(x) |^{r} d \mu (x) \\
& \geq &
\int_{I_{1}(q_{2}) } | x - c_{- \infty , \sup I_{1}(q_{2})} |^{r} d \mu (x) +
\int_{I_{2}(q_{2}) } | x - c_{\inf I_{2}(q_{2}) ,  \infty } |^{r} d \mu (x) \\
&=& D( \mu , q_{2} , r ) > D( \mu , q_{2} , r ) / 2 .
\end{eqnarray*}
Now let $n \geq 2$.
In this case we proceed indirectly and assume the contrary, i.e. we assume that
\begin{equation}
\label{dmqno}
\frac{D(\mu , q_{n}, r)}{n} < \frac{D(\mu , q_{n+1}, r)}{n+1}.
\end{equation}
Now let $q_{n}(\mathbb{R}) = \{ c_{1},\dots,c_{n} \}$ with $- \infty < c_{1} < \dots < c_{n} < \infty$.
Moreover, let
$q_{n+1}(\mathbb{R}) = \{ d_{1},\dots,d_{n+1} \}$ with $- \infty < d_{1} < \dots < d_{n+1} < \infty$.
Let $y_{1}=\sup q_{n}^{-1}(c_{1})$ and $z_{1}=\sup q_{n+1}^{-1}(d_{1})$.
From (\ref{dmqno}) we obtain together with (G4) that
\[
\int_{(-\infty , y_{1})} |x - c_{1}|^{r} d \mu (x) < \int_{(-\infty , z_{1})} |x - d_{1}|^{r} d \mu (x) .
\]
Consequently, Lemma \ref{lemm_ex_01} implies together with (G3)  
that $y_{1} < z_{1}$.
If $n > 2$, then we obtain inductively that
\begin{equation}
\label{qnobi}
\sup q_{n}^{-1}(c_{i}) < \sup q_{n+1}^{-1}(d_{i}) \leq  \sup q_{n+1}^{-1}(d_{i+1}) 
\end{equation}
for every $i \in \{1,..,n-1 \}$.
On the other hand, applying again (\ref{dmqno}) and 
(G4) we get 
\[
\int_{( \inf q_{n+1}^{-1}(d_{n+1}) , \infty )} |x - d_{n+1}|^{r} d \mu (x) 
> \int_{( \inf q_{n}^{-1}(c_{n}) , \infty )} |x - c_{n}|^{r} d \mu (x) .
\]
Thus, (\ref{aibi}) and Lemma \ref{lemm_ex_01} yields 
\[
\sup q_{n+1}^{-1}(d_{n}) = 
\inf q_{n+1}^{-1}(d_{n+1}) <  \inf q_{n}^{-1}(c_{n}) = \sup q_{n}^{-1}(c_{n-1}),
\]
which contradicts (\ref{qnobi}).
Thus, assertion (b) is proved. \smallskip \\
Now we will prove assertion (a). \smallskip \\
For any $n \in \mathbb{N}$ the number $n_{2}^{I}(n)$ can be divided into two parts.
The number of codecells on the left of interval $I$ and right to $I$.
Thus we define
\[
n_{2}^{I,l}(n) = \card \{ c \in q_{n}(\mathbb{R}) : \sup q_{n}^{-1}(c) \leq \min I  \} 
\]
and
\[
n_{2}^{I,r}(n) = \card \{ c \in q_{n}(\mathbb{R}) : \inf q_{n}^{-1}(c) \geq \max I  \} .
\]
If $n_{2}^{I,l}(n)=0$, then we immediately obtain $n_{2}^{I,l}(n) \leq n_{2}^{I,l}(n+1)$.
Now let us assume that $n_{2}^{I,l}(n) > 0$.
Let $a = \min q_{n}(\mathbb{R})$  and $b = \min q_{n+1}(\mathbb{R})$.
From (b) we obtain
\[
\int_{q_{n}^{-1}(a)} |x - a|^{r} d \mu (x) \geq  \int_{q_{n+1}^{-1}(b)} |x - b|^{r} d \mu (x) .
\]
Applying (\ref{aibi}) and (G3) we obtain from Lemma \ref{lemm_ex_01}  
that $\sup q_{n}^{-1}(a) \geq \sup q_{n+1}^{-1}(b)$. 
Now let us define $a^{\prime}$ resp. $b^{\prime}$ as the 
largest codepoint of $q_{n}(\mathbb{R})$ resp. $q_{n+1}(\mathbb{R})$ which
is located left to $I$. More exactly,
\[
a^{\prime} = \max \{ c \in q_{n}(\mathbb{R}) : \sup q_{n}^{-1}(c) \leq \min I  \}
\]
and
\[
b^{\prime} = \max \{ c \in q_{n+1}(\mathbb{R}) : \sup q_{n+1}^{-1}(c) \leq \min I  \} .
\]
Thus, using Lemma \ref{lemm_ex_01} again, 
we obtain 
by induction 
that $\sup q_{n}^{-1}(a^{\prime}) \geq \sup q_{n+1}^{-1}(b^{\prime})$, which implies
$n_{2}^{I,l}(n) \leq n_{2}^{I,l}(n+1)$.
By the same argumentation we deduce that $n_{2}^{I,r}(n) \leq n_{2}^{I,r}(n+1)$.
Assertion (b) follows now by the observation
$n_{2}^{I,l}(n) + n_{2}^{I,r}(n) = n_{2}^{I}(n)$ and 
the proof is finished.
\end{proof}

For any $n-$level Gersho quantizer $q_{n}$ we define

\begin{equation}
\label{jni_Def}
J_{n, I} = \{ a \in q_{n}(\mathbb{R}) :  q_{n}^{-1}(a) \subset I  \} 
\qquad \text{and} \quad n_{1}^{I}(n) = \card J_{n, I}.
\end{equation}
Moreover, we denote by $S(\mu )$ the interior of $\supp(\mu )$ and define
$T_{n,I}$ as the closure of the union of all the $n-$level codecells in $I$, i.e. 
\[
T_{n,I}= [ \inf q_{n}^{-1}(\min J_{n,I}) ,  \sup q_{n}^{-1}(\max J_{n,I})  ]
\]
if $J_{n, I}$ is nonempty.
As already depicted above, we first prove the asymptotic optimality for
the part of the quantization error where codecells are located inside a 
compact interval (cf. Lemma \ref{lemm_i1}). As a motivation for our approach let us consider the following
heuristic argumentation. For large $n$ and $a \in J_{n,I}$ we observe that  
\begin{eqnarray}
&& Q(r) \left(  \int_{q_{n}^{-1}(a)} h^{1/(1+r)} d \lambda \right)^{1+r} 
\approx 
\diam ( q_{n}^{-1}(a) )^{1+r} h(a) Q(r) \nonumber \\
&=& 
\int_{q_{n}^{-1}(a)} h(a) |x-a|^{r} d \lambda (x) 
\approx 
\int_{q_{n}^{-1}(a)} |x-a|^{r} d \mu (x) = n^{-1} D_{n,r}^{G}(\mu ),
\label{heur_01}
\end{eqnarray}
where the last equation is due to (G4).
The right hand side is independent of $a \in J_{n,I}$ and, therefore, also approximately the 
left hand side. But this implies that 
\begin{equation}
\label{heur_02}
Q(r) \left(  \int_{q_{n}^{-1}(a)} h^{1/(1+r)} d \lambda \right)^{1+r} 
\approx
Q(r) \left(  (n_{1}^{I}(n))^{-1} \int_{I} h^{1/(1+r)} d \lambda \right)^{1+r}.
\end{equation}
Now (\ref{heur_01}) and (\ref{heur_02}) indicate that
\begin{equation}
\label{heur_app_03}
n^{r} D_{n,r}^{G}(\mu ) \approx 
Q(r) n^{1+r} (n_{1}^{I}(n))^{-(1+r)} \left( \int_{I} h^{1/(1+r)} d \lambda \right)^{1+r}.
\end{equation}
To show exactly that asymptotic optimality holds, we first have to justify the approximation in (\ref{heur_01}) 
and (\ref{heur_02}). This is done by Lemma \ref{lemm_w_u_w_u}. 
Moreover, we must show that the right hand side of (\ref{heur_app_03}) converges against
a constant $C(I)$ and that $|C(I) - C_{0}|$ is arbitrarily small according to a suitable choice of 
$I \subset \supp(\mu)$. This is the content of Lemma \ref{lemm_i1} and the proof of Theorem \ref{main_theo}.      
\begin{lemma}
\label{lemm_w_u_w_u}
Let $r>1$.
Let $\mu$ be weakly unimodal and $(q_{n})_{n \in \mathbb{N}}$ the sequence of $n-$level Gersho quantizers for $\mu$.
Assume that $I \subset S ( \mu )$. Then an $m \in \mathbb{N}$ exists, such that $J_{n, I}$ 
is nonempty for every $n \geq m$.
Let $n \geq m$ and 
\begin{eqnarray*}
\varepsilon_{1}(n)  &=& \max \left\{ \left| \frac{\int_{q_{n}^{-1}(a)} h^{1/(1+r)} 
d \lambda}{\int_{T_{n,I}} h^{1/(1+r)} d \lambda} - \frac{1}{n_{1}^{I}(n)} \right| : 
a \in J_{n, I}  \right\}, \\
\varepsilon_{2}(n) &=& \max \{ 
| \int_{q_{n}^{-1}(a)} | x - a |^{r} d \mu (x) - \diam ( q_{n}^{-1}(a)  )^{1+r} h(a) Q(r) |
: a \in J_{n, I}  \}  , \\
\varepsilon_{3}(n) &=& \max \{ 
| \int_{q_{n}^{-1}(a)} h^{1/(1+r)} d \lambda  - \diam ( q_{n}^{-1}(a)  ) h(a)^{1/(1+r)} |
: a \in J_{n, I}  \} .
\end{eqnarray*}
Then,
\begin{eqnarray}
\diam ( T_{n,I} ) & \to & \diam (I), 
\label{equ_neps00} \\
n_{1}^{I}(n) \varepsilon_{1}(n) & \to & 0 , 
\label{equ_neps01} \\
(n_{1}^{I}(n))^{1+r} \varepsilon_{2}(n) & \to & 0 , 
\label{equ_neps02} \\
n_{1}^{I}(n) \varepsilon_{3}(n) & \to & 0,
\label{equ_neps03} 
\end{eqnarray}
as $n \to \infty$.
Moreover,
\begin{equation}
\label{neps3}
\limsup_{n \to \infty} \frac{\varepsilon_{3}(n)}{ \min \{ \diam ( q_{n}^{-1}(a)  ) : a \in J_{n, I} \} } = 0.
\end{equation}
\end{lemma}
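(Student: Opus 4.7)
The plan is to exploit weak unimodality to obtain uniform bounds on $h$ on a compact neighborhood of $I$ in $S(\mu)$, and then combine (G4) with the optimality of codepoints to tightly control cell sizes inside $I$. As preliminary input, I would observe that each Gersho quantizer $q_n$, using the optimal codepoint on every cell, satisfies $D(\mu,q_n,r) \leq \int|x|^r\,d\mu =: K$ (comparing cell by cell to the suboptimal choice $c = 0$ on every cell). Combined with Lemma \ref{mom_mon}(b), which gives that $D(\mu,q_n,r)/n$ is decreasing, this forces $D(\mu,q_n,r)/n \leq K/n \to 0$. I would then fix a slightly larger compact interval $I^+ = [u-\eta_0, v+\eta_0] \subset S(\mu)$; weak unimodality gives $h_0 := \min_{I^+} h > 0$, $H_0 := \max_{I^+} h < \infty$, and a uniform modulus of continuity $\omega_n \to 0$ for $h$ on intervals in $I^+$ of length tending to $0$ (similarly $\omega_n'$ for $h^{1/(1+r)}$).

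The first step is to show that every codecell intersecting $I$ has diameter tending to $0$, which immediately yields $J_{n,I}\ne\emptyset$, \eqref{equ_neps00}, and $n_1^I(n)\to\infty$. The key estimate: if a cell $q_n^{-1}(a) = [a_i,b_i)$ intersects $I$ and has diameter $\ge \eta_0$, or has $a_i \leq u - \eta_0$, or $b_i \geq v + \eta_0$, then $q_n^{-1}(a)$ contains a sub-interval of $I^+$ of length $\eta_0$; using $h \geq h_0$ together with the elementary $\int_J |x-c|^r\,dx \geq \eta_0^{r+1}/(2^r(r+1))$ for any $c \in\mathbb{R}$ and any interval $J$ of length $\eta_0$ (the minimum being attained at the midpoint), one concludes $\int_{q_n^{-1}(a)} |x-a|^r\,d\mu \geq h_0\eta_0^{r+1}/(2^r(r+1))$. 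By (G4) this forces $D(\mu,q_n,r)/n$ to be bounded below, contradicting $D(\mu,q_n,r)/n\to 0$ for large $n$. Hence for large $n$, all cells intersecting $I$ lie in $I^+$ with diameter $<\eta_0$; a boundary analysis then shows the cells straddling $\{u, v\}$ extend at most $\eta_0$ into $I$, so $\diam T_{n,I} \geq \diam I - 2\eta_0$. Since $\eta_0$ was arbitrary and $T_{n,I} \subset I$, \eqref{equ_neps00} follows, and $n_1^I(n)\to\infty$ because cells in $J_{n,I}$ have diameter going to $0$ yet cover $T_{n,I}$ of length close to $\diam I$.

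Next I establish the two-sided comparison $\int_{q_n^{-1}(a)} |x-a|^r\,d\mu = h(a)Q(r)\ell_i^{r+1}(1+O(\omega_n))$, uniformly over $a \in J_{n,I}$, where $\ell_i := \diam(q_n^{-1}(a))$. The upper bound uses that $a$ minimizes $c \mapsto \int |x-c|^r h\,dx$: comparing to the midpoint $a^* = (a_i+b_i)/2$ and using $|h-h(a)|\leq\omega_n$ on the cell yields $\int |x-a|^r h\,dx \leq h(a^*)Q(r)\ell_i^{r+1} + \omega_n \ell_i^{r+1} \leq h(a)Q(r)\ell_i^{r+1}+C\omega_n\ell_i^{r+1}$. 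The lower bound uses the absolute minimum of $c\mapsto\int_{a_i}^{b_i}|x-c|^r\,dx$ at $c = a^*$, giving $\int |x-a|^r h\,dx \geq h(a) Q(r)\ell_i^{r+1} - \omega_n\ell_i^{r+1}$. Since (G4) equates the left side to $D(\mu,q_n,r)/n$, we obtain $\ell_i = (D(\mu,q_n,r)/(nh(a)Q(r)))^{1/(1+r)}(1+O(\omega_n))$, so $\ell_i h(a)^{1/(1+r)}$ is asymptotically the same constant across $a\in J_{n,I}$. In particular $\ell_{\max}/\ell_{\min}$ is bounded by $(H_0/h_0)^{1/(1+r)}(1+o(1))$, and $n_1^I(n)\ell_{\max} \leq C\sum_{a\in J_{n,I}} \ell_i \leq C\diam I^+$ is uniformly bounded in $n$.

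Finally, all four convergences follow. The estimate $\varepsilon_2(n) \leq C\omega_n\ell_{\max}^{r+1}$ is immediate from the comparison, giving $(n_1^I(n))^{1+r}\varepsilon_2(n) \leq C\omega_n(n_1^I(n)\ell_{\max})^{1+r}\to 0$. For $\varepsilon_3$, uniform continuity of $h^{1/(1+r)}$ on $I^+$ yields $\varepsilon_3(n)\leq\omega_n'\ell_{\max}$, hence $n_1^I(n)\varepsilon_3(n)\leq\omega_n'\cdot n_1^I(n)\ell_{\max}\to 0$, and \eqref{neps3} follows from $\varepsilon_3(n)/\ell_{\min} \leq \omega_n'(\ell_{\max}/\ell_{\min})\to 0$. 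For $\varepsilon_1$, writing $G_a := \int_{q_n^{-1}(a)} h^{1/(1+r)}\,d\lambda$ and $S_n := \sum_{a\in J_{n,I}} G_a$, the identity $G_a = \ell_i h(a)^{1/(1+r)} + O(\omega_n'\ell_i)$ combined with the near-constancy of $\ell_i h(a)^{1/(1+r)}$ gives $\max G_a - \min G_a = O(\omega_n+\omega_n')(D(\mu,q_n,r)/n)^{1/(1+r)}$, so $\varepsilon_1(n) \leq (\max G_a - \min G_a)/S_n = O(\omega_n+\omega_n')/n_1^I(n)$, whence $n_1^I(n)\varepsilon_1(n)\to 0$. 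The main obstacle is avoiding circularity: the tight control of $\ell_i$ must rest only on (G4), the sup/inf of $h$ on $I^+$, and the bound $D(\mu,q_n,r)/n\to 0$ already extracted from Lemma \ref{mom_mon}(b), and not on the asymptotic behavior of $D(\mu,q_n,r)$ that Theorem \ref{main_theo} will subsequently derive from this lemma.
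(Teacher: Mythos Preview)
Your proposal is correct and follows essentially the same approach as the paper: both arguments rest on the uniform positivity and continuity of $h$ on $I$, the equality of per-cell contributions from (G4), and the resulting two-sided control $\int_{q_n^{-1}(a)}|x-a|^r\,d\mu = h(a)Q(r)\ell_i^{1+r}(1+o(1))$, from which the diameter bounds and all four $\varepsilon$-estimates flow via the modulus of continuity. Your bookkeeping is slightly more streamlined than the paper's---you work with direct $\pm\omega_n$ bounds where the paper invokes intermediate-value points $\xi_a,\tau_a$, and you handle $\varepsilon_1$ via the near-constancy of $G_a$ rather than the paper's auxiliary ratio $f(n)$---but the underlying ideas coincide (and note that your appeal to Lemma~\ref{mom_mon}(b) is superfluous, since $D(\mu,q_n,r)\le K$ already gives $D(\mu,q_n,r)/n\to 0$).
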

\begin{proof}
Because $\mu$ is weakly unimodal and due to $I \subset S (\mu )$ we get
\begin{equation}
\label{m1iert}
0 < M_{1,I} := \min \{ h(x) : x \in I \} \text{ and } 
\end{equation}
\[
\infty > M_{2,I} := \max \{ h(x) : x \in I \} \geq M_{1,I} .
\]
We divide the remaining proof into six steps. \smallskip \\
1. We will show that $J_{n, I}$ is nonempty for every $n \geq m$. \smallskip \\
Note that  
\begin{eqnarray}
D(\mu , q_{n} , r) &=& \sum_{a \in q_{n}(\mathbb{R})} \int_{q_{n}^{-1}(a)} |x-a|^{r} d \mu (x)  \nonumber \\
& \leq & \sum_{a \in q_{n}(\mathbb{R})} \int_{q_{n}^{-1}(a)} |x-q_{1}(x)|^{r} d \mu (x) =
D(\mu , q_{1} , r) < \infty ,
\label{equ_1_n}
\end{eqnarray}
which implies 
\begin{equation}
\label{dmuqnertzxcv}
D(\mu , q_{n} , r) / n \to 0
\end{equation}
as $n \to \infty$. If $J_{n, I} = \emptyset$, then 
a $c \in q_{n}(\mathbb{R})$ exists, such that 
\[
\diam ( q_{n}^{-1}(c) \cap I ) \geq  \diam (I)/2 \text{ and }
q_{n}^{-1}(c) \backslash  I \neq \emptyset .
\]
Recall $I=[u,v]$.
Hence, (G4) yields 
\begin{equation}
\label{e1e234}
D( \mu , q_{n} , r) / n \geq \min \{ e_{1} , e_{2} \} > 0
\end{equation}
with
\[
e_{1} = \int_{[u , (u+v)/2 ]} |x - ( \frac{3}{4} u + \frac{1}{4} v ) |^{r} d \mu (x)
\]
and
\[
e_{2} = \int_{[(u+v)/2 , v ]} |x - ( \frac{1}{4} u + \frac{3}{4} v ) |^{r} d \mu (x) .
\]
Note, that the right hand side of (\ref{e1e234}) is independent of $n$. Thus, we deduce from 
(\ref{dmuqnertzxcv}) and (\ref{e1e234}) that  
an $m \in \mathbb{N}$ exists, such that $J_{n, I}$ is nonempty for all $n \geq m$.
Let us assume w.l.o.g. for the rest of this proof that $m=1$. 
\smallskip \\
2. We will prove (\ref{equ_neps00}). \smallskip \\
We proceed indirectly and assume that a constant $M \in [0 , \diam (I))$ and a
subsequence of $(\diam ( T_{n,I} ))_{n \in \mathbb{N}}$ exists, which
we also denote by $(\diam ( T_{n,I} ))_{n \in \mathbb{N}}$, such that 
\begin{equation}
\label{tniert}
\diam ( T_{n,I} ) \to M \quad \text{ as } n \to \infty .
\end{equation}
Let 
\begin{eqnarray*}
&& s_{n} = \max\{ a \in q_{n}(\mathbb{R}) : \inf q_{n}^{-1}(a) < \min I \} , \\
&& t_{n} = \min\{ a \in q_{n}(\mathbb{R}) : \sup q_{n}^{-1}(a) > \max I \}.
\end{eqnarray*}
Let 
\[
s = \lim_{n \to \infty } ( \sup q_{n}^{-1}(s_{n}) - \min I ) \text{ and } 
t = \lim_{n \to \infty } ( \max I - \inf q_{n}^{-1}(t_{n}) ) ). 
\]
Due to (\ref{tniert}) we know that $s$ and $t$ exist and that
$s+t = \diam I - M > 0$.
If we assume w.l.o.g. that $s>0$, then we obtain
\begin{eqnarray}
&& \liminf_{n \to \infty } \int_{q_{n}^{-1}(s_{n})} | x - s_{n} |^{r} d \mu (x) \nonumber \\
& \geq &
\liminf_{n \to \infty } \int_{I \cap q_{n}^{-1}(s_{n})} | x - s_{n} |^{r} M_{1,I} d \lambda (x) \nonumber \\
& \geq & 
\liminf_{n \to \infty } \int_{I \cap q_{n}^{-1}(s_{n})} 
| x - (\sup q_{n}^{-1}(s_{n}) + \min I)/2 |^{r} M_{1,I} d \lambda (x)  \nonumber \\
&=& 
\liminf_{n \to \infty } Q(r) M_{1,I}( \sup q_{n}^{-1}(s_{n}) - \min I )^{1+r} = Q(r) M_{1,I} s^{1+r} > 0.
\label{scfghjk}
\end{eqnarray}
On the other hand we get from (G4) 
that
\begin{eqnarray}
\limsup_{n \to \infty } \int_{q_{n}^{-1}(s_{n})} | x - s_{n} |^{r} d \mu (x) &=& 
\limsup_{n \to \infty } \frac{D(\mu , q_{n} , r)}{n} .
\label{equna_rn01}
\end{eqnarray}
Combining (\ref{equna_rn01}) and (\ref{equ_1_n}) we obtain
\[
\limsup_{n \to \infty } \int_{q_{n}^{-1}(s_{n})} | x - s_{n} |^{r} d \mu (x) = 0,
\]
which contradicts (\ref{scfghjk}).
\smallskip \\
3. We will show that $n_{0} \in \mathbb{N}$ and constants $A_{0}, B_{0} \in (0, \infty)$ exist, such that 
\[
\max \{ \diam ( q_{n}^{-1}(a)  ) : a \in J_{n, I} \} \leq A_{0} (n_{1}^{I}(n))^{-1}
\]
and
\[
\min \{ \diam ( q_{n}^{-1}(a)  ) : a \in J_{n, I} \} \geq B_{0} (n_{1}^{I}(n))^{-1}
\] 
for every  $n \geq n_{0}$. \smallskip \\
Let $a \in J_{n, I}$ and notice that
\begin{eqnarray}
&& M_{1,I} \leq M_{1,I}^{a} = 
\min \{ h(x) : x \in q_{n}^{-1}(a) \} , \nonumber \\
&& M_{2,I} \geq M_{2,I}^{a} = \max \{ h(x) : x \in q_{n}^{-1}(a) \}.
\label{def_m1i}
\end{eqnarray}
Applying property (G3) of $q_{n}$ we obtain that
\begin{eqnarray}
Q(r) M_{1,I}^{a} \diam ( q_{n}^{-1}(a)  )^{1+r} &=&
\inf \{ \int_{q_{n}^{-1}(a) } | x - b |^{r} M_{1,I}^{a} d \lambda (x) : b \in \mathbb{R} \} \nonumber \\
& \leq &  \int_{q_{n}^{-1}(a) } | x - a |^{r} d \mu (x) \nonumber \\
& \leq & 
\inf \{ \int_{q_{n}^{-1}(a)} | x - b |^{r} M_{2,I}^{a} d \lambda (x) : b \in \mathbb{R} \} \nonumber \\
&=& 
Q(r) M_{2,I}^{a} \diam ( q_{n}^{-1}(a)  )^{1+r}. 
\label{rel_msdf} 
\end{eqnarray}
Now (G4) and (\ref{def_m1i}) are implying that
\begin{eqnarray}
&& M_{1,I} \left( \max \{ \diam ( q_{n}^{-1}(b)  ) : b \in J_{n, I} \} \right) ^{1+r} \nonumber \\
& \leq &
Q(r)^{-1}  \max \{ \int_{q_{n}^{-1}(b) } | x - b |^{r} d \mu (x) : b \in J_{n, I} \} \nonumber \\
&=&
Q(r)^{-1}  \min \{ \int_{q_{n}^{-1}(b) } | x - b |^{r} d \mu (x) : b \in J_{n, I} \} \nonumber \\
& \leq &
M_{2,I} \left( \min \{ \diam ( q_{n}^{-1}(b)  ) : b \in J_{n, I} \} \right) ^{1+r}.
\label{low_up_m1m2}
\end{eqnarray}
According to (\ref{rel_msdf}) a $\xi_{a} \in q_{n}^{-1}(a) \subset I$ exists, such that
\begin{equation}
\label{repr_a_nerr}
\int_{q_{n}^{-1}(a) } | x - a |^{r} d \mu (x) = Q(r) h(\xi_{a}) \diam ( q_{n}^{-1}(a)   )^{1+r} .
\end{equation}
From (\ref{low_up_m1m2}) we deduce that
\begin{eqnarray}
&& n_{1}^{I}(n) \cdot \min \{ \diam ( q_{n}^{-1}(b)  ) : b \in J_{n, I} \} \leq \diam (T_{n, I}) \nonumber \\
& \leq &
n_{1}^{I}(n) \cdot \max \{ \diam ( q_{n}^{-1}(b)  ) : b \in J_{n, I} \} \nonumber \\
& \leq &
n_{1}^{I}(n) \cdot \left( \frac{ M_{2,I}}{ M_{1,I}} \right)^{1/(1+r)}  
\min \{ \diam ( q_{n}^{-1}(b)   ) : b \in J_{n, I} \} \nonumber \\
& \leq &
\left( \frac{ M_{2,I}}{ M_{1,I}} \right)^{1/(1+r)} \diam (T_{n, I}),
\label{hill_step}
\end{eqnarray}
where the last inequality follows from the first one. 
According to (\ref{equ_neps00}) and definition (\ref{jni_Def}) choose $n_{0} \in \mathbb{N}$ such that
\[
\diam (I) / 2 \leq \diam (T_{n, I}) \leq \diam (I)
\]
for every $n \geq n_{0}$.
Hence, the setting 
\[
A_{0} =  \left( \frac{ M_{2,I}}{ M_{1,I}} \right)^{1/(1+r)} \diam (I), \qquad 
B_{0} = \left( \frac{ M_{1,I}}{ M_{2,I}} \right)^{1/(1+r)} \diam (I)/2
\]
and relation (\ref{hill_step}) finishes the proof of this step. \smallskip \\
4. We will prove (\ref{equ_neps02}). \smallskip \\
Let us denote by $\kappa_{I}$ the modulus of continuity
of $h$ restricted to $I$, i.e.
\[
\kappa_{I}(w) = \sup \{ | h(x) - h(y) | : x,y \in I , |x-y| \leq w \}.
\]
Applying (\ref{repr_a_nerr}) and Step 3 we deduce for every $n \geq n_{0}$ that
\begin{eqnarray}
&& \max \{ 
| \int_{q_{n}^{-1}(a)} | x - a |^{r} d \mu (x) - \diam ( q_{n}^{-1}(a) )^{1+r} h(a) Q(r) |
: a \in J_{n, I}  \} \nonumber \\
& \leq & Q(r) \max \{ \kappa_{I} ( \diam ( q_{n}^{-1}(a) ) ) 
\diam ( q_{n}^{-1}(a)   )^{1+r} : a \in J_{n, I}  \}  \nonumber  \\
&\leq &  Q(r) \kappa_{I} ( A_{0} (n_{1}^{I}(n))^{-1} ) (A_{0} (n_{1}^{I}(n))^{-1})^{1+r}.
\label{kapizu}
\end{eqnarray}
Clearly, according to (\ref{equ_1_n}) we know that 
\begin{equation}
\label{ctozer}
D(\mu , q_{n} , r) / n \to 0  \qquad \text{as }  n \to \infty .
\end{equation}
This implies that 
\begin{equation}
\label{convinftz}
n_{1}^{I}(n) \to \infty \qquad \text{as } n \to \infty .
\end{equation}
Indeed, if we assume the contrary, then
we can find a sequence $(v_{n})_{n \in \mathbb{N}}$ with $v_{n} \in J_{n,I}$ such that
\[
\int_{q_{n}^{-1}(v_{n})} | x - v_{n} |^{r} 
d \mu \geq Q(r) M_{1,I} \liminf_{n \to \infty } \diam ( q_{n}^{-1}(v_{n})  )^{1+r} > 0, 
\]
which implies according to property (G4) of a Gersho quantizer that 
\[
\liminf_{n \to \infty } D(\mu , q_{n} , r) / n > 0,
\]
a contradiction to (\ref{ctozer}).
Thus, (\ref{convinftz}) implies together with (\ref{kapizu}) and the definition of $\varepsilon_{2}(n)$ that 
$(n_{1}^{I}(n))^{1+r} \varepsilon_{2}(n) \to 0$ as $n \to \infty$. \smallskip \\
5. We will prove (\ref{equ_neps03}) and (\ref{neps3}). \smallskip \\
For every $a,b \in J_{n, I}$ a $\tau_{a} \in q_{n}^{-1}(a)$ and $\tau_{b} \in q_{n}^{-1}(b)$ 
exist, such that
\begin{equation}
\label{taua}
h(\tau_{a})^{1/(1+r)} \diam ( q_{n}^{-1}(a)  ) = \int_{q_{n}^{-1}(a)} h^{1/(1+r)} d \lambda
\end{equation}
and
\begin{equation}
\label{taub}
h(\tau_{b})^{1/(1+r)} \diam ( q_{n}^{-1}(b)  ) = \int_{q_{n}^{-1}(b)} h^{1/(1+r)} d \lambda .
\end{equation}
Moreover,
\begin{eqnarray*}
&& | \int_{q_{n}^{-1}(a)} h^{1/(1+r)} d \lambda - \diam ( q_{n}^{-1}(a) ) h(a)^{1/(1+r)} | \\
&=& | h(\tau_{a})^{1/(1+r)}  -  h(a)^{1/(1+r)} |\diam ( q_{n}^{-1}(a)  ) .
\end{eqnarray*}
If we denote by $\omega_{I}$ the modulus of continuity of $h^{1/(1+r)}$ restricted
to $I$, then step 3 yields for every $n \geq n_{0}$ that 
\begin{eqnarray*}
\varepsilon_{3}(n) &=& \max \{ | \int_{q_{n}^{-1}(a)} h^{1/(1+r)} d \lambda 
- \diam ( q_{n}^{-1}(a)  ) h(a)^{1/(1+r)} | : a \in J_{n,I} \} \\
& \leq &  \omega_{I} ( A_{0} (n_{1}^{I}(n))^{-1} ) A_{0} (n_{1}^{I}(n))^{-1} .
\end{eqnarray*}
Because $h^{1/(1+r)}$ is uniformly continuous on $I$ we have 
$n_{1}^{I}(n) \varepsilon_{3}(n) \to 0$ as $n \to \infty$. 
Moreover, (\ref{neps3}) follows immediately from step 3.
\smallskip \\
6. We will prove (\ref{equ_neps01}). \smallskip \\
Let $n \geq n_{0}$ and $a,b \in J_{n, I}$ and let $\xi_{a} \in q_{n}^{-1}(a)$ and $\xi_{b} \in q_{n}^{-1}(b)$ 
according to (\ref{repr_a_nerr}). Let $\tau_{a} \in q_{n}^{-1}(a)$ and $\tau_{b} \in q_{n}^{-1}(b)$
according to (\ref{taua}) and (\ref{taub}).
We define 
\[
f(n) = \max \left\{ \left| \left( \frac{h(\tau_{c})}{h(\tau_{d})} \right)^{1/(1+r)} 
\left( \frac{h(\xi_{d})}{h(\xi_{c})} \right)^{1/(1+r)} - 1 \right| :  c,d \in J_{n, I} \right\} . 
\]
From (\ref{ctozer}) we obtain by the same argumentation as for (\ref{convinftz}) that 
\[
\sup \{ \diam ( q_{n}^{-1}(c) ) : c \in J_{n,I} \} \to 0 \qquad \text{as } n \to \infty .
\]
But this implies together with the continuity of $h$ that $f(n) \to 0$ as $n \to \infty$.
Now we calculate with (\ref{taua}) and (\ref{taub}) that 
\begin{eqnarray*}
&& | \int_{q_{n}^{-1}(a)} h^{1/(1+r)} d \lambda - \int_{q_{n}^{-1}(b)} h^{1/(1+r)} d \lambda | \\
&=&
| h(\tau_{a})^{1/(1+r)} \diam ( q_{n}^{-1}(a)   ) - h(\tau_{b})^{1/(1+r)} \diam ( q_{n}^{-1}(b)   ) | .
\end{eqnarray*}
Let us assume w.l.o.g. that 
\[
h(\tau_{a})^{1/(1+r)} \diam ( q_{n}^{-1}(a)   ) \geq h(\tau_{b})^{1/(1+r)} \diam ( q_{n}^{-1}(b)  ).
\]
Thus we obtain from property (G4), relation (\ref{repr_a_nerr}) and step 3 that
\begin{eqnarray}
&& | \int_{q_{n}^{-1}(a)} h^{1/(1+r)} d \lambda - \int_{q_{n}^{-1}(b)} h^{1/(1+r)} d \lambda | \nonumber \\
&=&
h(\tau_{a})^{1/(1+r)} \diam ( q_{n}^{-1}(a)  ) - 
h(\tau_{b})^{1/(1+r)} \diam ( q_{n}^{-1}(b)  ) \nonumber \\
&=&
h(\tau_{a})^{1/(1+r)} \diam ( q_{n}^{-1}(b)  ) \left( \frac{h(\xi_{b})}{h(\xi_{a})} \right)^{1/(1+r)} 
\nonumber \\
&& \qquad \qquad \qquad \qquad  - h(\tau_{b})^{1/(1+r)} \diam ( q_{n}^{-1}(b)   ) \nonumber \\
& \leq & \diam ( q_{n}^{-1}(b)   ) h( \tau_{b} )^{1/(1+r)} f(n) \nonumber \\
&\leq &
A_{0} \frac{1}{n_{1}^{I}(n)} M_{2,I}^{1/(1+r)} f(n) .
\label{def_naom1}
\end{eqnarray}
Now we define for every $c \in J_{n, I}$
\[
p_{c} = \frac{\int_{q_{n}^{-1}(c)} h^{1/(1+r)} d \lambda}{\int_{T_{n,I}} h^{1/(1+r)} d \lambda} .
\]
Applying (\ref{def_naom1}) we get
\begin{eqnarray} 
&& |\frac{1}{n_{1}^{I}(n)}  - p_{a} | 
= |\frac{1}{n_{1}^{I}(n)} \sum_{c \in J_{n,I}} ( p_{c} - p_{a} ) | \nonumber \\
&\leq & \frac{1}{n_{1}^{I}(n)}  \sum_{c \in J_{n,I}} 
\frac{| \int_{q_{n}^{-1}(a)} h^{1/(1+r)} d \lambda - \int_{q_{n}^{-1}(c)} h^{1/(1+r)} d \lambda |}
{\int_{T_{n,I}} h^{1/(1+r)} d \lambda } \nonumber \\
& \leq & \left( \int_{T_{n,I}} h^{1/(1+r)} d \lambda \right)^{-1} 
\frac{A_{0}}{n_{1}^{I}(n)} M_{2,I}^{1/(1+r)} f(n).
\label{123wert}
\end{eqnarray}
Now, (\ref{123wert}) and (\ref{equ_neps00}) imply (\ref{equ_neps01}) and the proof is finished.
\end{proof}

Recall $Q(r)=2^{-r}(1+r)^{-1}$ and $S(\mu )$ as the interior of $\supp ( \mu )$.

\begin{lemma}
\label{lemm_i1}
Let $\mu$ be weakly unimodal and $(q_{n})_{n \in \mathbb{N}}$ the sequence of $n-$level Gersho quantizers for $\mu$.
Assume that $I \subset S ( \mu )$.
Then,
\[
n_{1}^{I}(n)^{r} \sum_{a \in J_{n, I}} \int_{q_{n}^{-1}(a)} | x - a |^{r} d \mu (x) \to  Q(r) 
\left( \int_{I} h^{1/(1+r)} d \lambda \right)^{1+r} 
\]
as $n \to \infty$.
\end{lemma}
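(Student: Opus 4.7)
The plan is to make the heuristic chain (\ref{heur_01})--(\ref{heur_app_03}) precise using the quantitative bounds from Lemma \ref{lemm_w_u_w_u}. For brevity set $d_a = \diam(q_n^{-1}(a))$, $I_a = \int_{q_n^{-1}(a)} h^{1/(1+r)} d\lambda$, $K_n = \int_{T_{n,I}} h^{1/(1+r)} d\lambda$, and let $S_n = \sum_{a \in J_{n,I}} \int_{q_n^{-1}(a)} |x-a|^r d\mu(x)$ be the quantity to control. The strategy consists of three successive substitutions, each absorbing one of $\varepsilon_2, \varepsilon_3, \varepsilon_1$, followed by identification of $\lim K_n$.

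First, $\varepsilon_2(n)$ allows the per-cell replacement $\int_{q_n^{-1}(a)} |x-a|^r d\mu(x) \approx Q(r) d_a^{1+r} h(a)$; summing over the $n_1^I(n)$ cells of $J_{n,I}$ and multiplying by $n_1^I(n)^r$, the total error is at most $n_1^I(n)^{1+r} \varepsilon_2(n)$, which tends to zero by (\ref{equ_neps02}). Second, since $|I_a - d_a h(a)^{1/(1+r)}| \leq \varepsilon_3(n)$, the elementary inequality $|x^{1+r}-y^{1+r}| \leq (1+r)\max(|x|,|y|)^r|x-y|$ combined with the bound $I_a = O(1/n_1^I(n))$ (step 3 of Lemma \ref{lemm_w_u_w_u}) and (\ref{neps3}) gives a per-cell error of order $\varepsilon_3(n)/n_1^I(n)^r$; after summing and multiplying by $n_1^I(n)^r$ the total is $O(n_1^I(n) \varepsilon_3(n))$, which vanishes by (\ref{equ_neps03}). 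Thus $S_n$ is asymptotic to $Q(r) \sum_a I_a^{1+r}$. Third, $\varepsilon_1(n)$ gives $I_a = K_n/n_1^I(n) + K_n \cdot O(\varepsilon_1(n))$, and the same mean value inequality yields $\sum_a I_a^{1+r} = K_n^{1+r}/n_1^I(n)^r + O(K_n^{1+r}\varepsilon_1(n)/n_1^I(n)^{r-1})$; after multiplication by $n_1^I(n)^r$ the remaining error is $O(K_n^{1+r} n_1^I(n) \varepsilon_1(n))$, which vanishes by (\ref{equ_neps01}).

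These three steps yield $n_1^I(n)^r S_n - Q(r) K_n^{1+r} \to 0$. To finish I would observe that $T_{n,I} \subset I$, $\diam(T_{n,I}) \to \diam(I)$ by (\ref{equ_neps00}), and $h^{1/(1+r)}$ is continuous and hence bounded on the compact set $I$; dominated convergence then yields $K_n \to \int_I h^{1/(1+r)} d\lambda$, and the desired limit $Q(r) \bigl( \int_I h^{1/(1+r)} d\lambda \bigr)^{1+r}$ follows. The main obstacle is not analytic but bookkeeping: each $\varepsilon_i(n)$ is amplified by a different power of $n_1^I(n)$, and the rates in (\ref{equ_neps01})--(\ref{equ_neps03}) were arranged precisely to absorb these amplifications after the errors are passed through the $(1+r)$-th power via the mean value inequality. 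Once the three substitutions are carried out in the correct order, the calculation closes without any further input.
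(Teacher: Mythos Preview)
Your proposal is correct and follows essentially the same three-substitution strategy as the paper's proof: replace the cell distortion by $Q(r)d_a^{1+r}h(a)$ via $\varepsilon_2$, then $d_a h(a)^{1/(1+r)}$ by $I_a$ via $\varepsilon_3$, then $I_a$ by $K_n/n_1^I(n)$ via $\varepsilon_1$, and finally identify $\lim K_n$ from (\ref{equ_neps00}). The only cosmetic difference is that the paper handles the $(1+r)$-th power by factoring out multiplicative ratios and sandwiching between $\limsup$ and $\liminf$ (using (\ref{neps3}) at the $\varepsilon_3$ step), whereas you linearize directly via the mean value inequality and control the additive errors with (\ref{equ_neps02}), (\ref{equ_neps03}), (\ref{equ_neps01}); both devices yield the same conclusion.
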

\begin{proof}
We deduce from (\ref{equ_neps02}) and the definition of $\varepsilon_{3}(n)$ that
\begin{eqnarray}
&& \limsup_{n \to \infty} n_{1}^{I}(n)^{r} 
\sum_{a \in J_{n, I}} \int_{q_{n}^{-1}(a)} | x - a |^{r} d \mu (x)  \nonumber \\
& \leq &
\limsup_{n \to \infty} n_{1}^{I}(n)^{r} \left( n_{1}^{I}(n) \varepsilon_{2}(n) + 
\sum_{a \in J_{n, I}} \diam( q_{n}^{-1}(a)  )^{1+r} h(a) Q(r)  \right) \nonumber \\
&=& 
\limsup_{n \to \infty} Q(r) n_{1}^{I}(n)^{r}  
\sum_{a \in J_{n, I}} \diam( q_{n}^{-1}(a)  )^{1+r} h(a)   \nonumber \\
& \leq &
\limsup_{n \to \infty}  Q(r) n_{1}^{I}(n)^{r} 
\sum_{a \in J_{n, I}} 
\left(  \int_{q_{n}^{-1}(a)} h^{1/(1+r)} d \lambda + \varepsilon_{3}(n) \right)^{1+r}.
\label{n_limss}
\end{eqnarray}
By completely similar arguments we obtain
\begin{eqnarray}
&& \liminf_{n \to \infty} n_{1}^{I}(n)^{r} 
\sum_{a \in J_{n, I}} \int_{q_{n}^{-1}(a)} | x - a |^{r} d \mu (x)  \nonumber \\
& \geq &
\liminf_{n \to \infty}  Q(r) n_{1}^{I}(n)^{r} 
\sum_{a \in J_{n, I}} 
\left(  \int_{q_{n}^{-1}(a)} h^{1/(1+r)} d \lambda - \varepsilon_{3}(n) \right)^{1+r}.
\label{n_limi}
\end{eqnarray}
The definitions
\[
v_{a} = \int_{q_{n}^{-1}(a)} h^{1/(1+r)} d \lambda, \qquad 
m_{0} = \min \{ v_{c} :  c \in J_{n, I}  \}
\]
imply together with (\ref{neps3}) that 
\begin{eqnarray}
1 & \leq  & \limsup_{n \to \infty}  \frac{n_{1}^{I}(n)^{r} \sum_{a \in J_{n, I}} (v_{a} + \varepsilon_{3}(n))^{1+r} }
{n_{1}^{I}(n)^{r} \sum_{a \in J_{n, I}} v_{a}^{1+r}} \nonumber \\
& \leq &
\limsup_{n \to \infty}  \frac{ (1 + \frac{\varepsilon_{3}(n)}{m_{0}})^{1+r} \sum_{a \in J_{n, I}} v_{a}^{1+r} }
{ \sum_{a \in J_{n, I}} v_{a}^{1+r}} \nonumber \\
& \leq &
\limsup_{n \to \infty}  \left(1 + \frac{\varepsilon_{3}(n)}
{  M_{1,I}^{1/(1+r)}\min \{ \diam ( q_{n}^{-1}(a)  ) : a \in J_{n, I} \}   }\right)^{1+r} = 1,
\label{1greq1}
\end{eqnarray}
with $M_{1,I}$ as defined in (\ref{m1iert}).
Similarly,
\begin{eqnarray}
1 & \geq  & \liminf_{n \to \infty}  \frac{n_{1}^{I}(n)^{r} \sum_{a \in J_{n, I}} (v_{a} - \varepsilon_{3}(n))^{1+r} }
{n_{1}^{I}(n)^{r} \sum_{a \in J_{n, I}} v_{a}^{1+r}} \nonumber \\
& \geq &
\liminf_{n \to \infty}  \left(1 - \frac{\varepsilon_{3}(n)}
{  M_{1,I}^{1/(1+r)}\min \{ \diam ( q_{n}^{-1}(a)  ) : a \in J_{n, I} \}   }\right)^{1+r} = 1.
\label{1loeq1}
\end{eqnarray}
Thus, (\ref{n_limss}) turns together with (\ref{1greq1}) and (\ref{equ_neps01}) into
\begin{eqnarray}
&& \limsup_{n \to \infty} n_{1}^{I}(n)^{r} \sum_{a \in J_{n, I}} 
\int_{q_{n}^{-1}(a)} | x - a |^{r} d \mu (x) \nonumber \\
& \leq &
\limsup_{n \to \infty}  Q(r) n_{1}^{I}(n)^{r} 
\sum_{a \in J_{n, I}} 
\left(  \int_{q_{n}^{-1}(a)} h^{1/(1+r)} d \lambda  \right)^{1+r}
\nonumber \\
& \leq &
\limsup_{n \to \infty}  Q(r) n_{1}^{I}(n)^{r} 
\sum_{a \in J_{n, I}} 
\left(  n_{1}^{I}(n)^{-1} + 
\varepsilon_{1}(n) \right)^{1+r} \left(  \int_{T_{n,I}} h^{1/(1+r)} d \lambda  \right)^{1+r} \nonumber \\
&=& Q(r) \limsup_{n \to \infty } \left( 1 + 
\varepsilon_{1}(n) n_{1}^{I}(n)  \right)^{1+r} 
\left(  \int_{T_{n,I}} h^{1/(1+r)} d \lambda  \right)^{1+r}
\nonumber \\
&=& Q(r) \left( \int_{I} h^{1/(1+r)} d \lambda \right)^{1+r}  . 
\label{lims_upp_b} 
\end{eqnarray}
On the other hand, from (\ref{n_limi}) we deduce with (\ref{1loeq1}) and (\ref{equ_neps01}) that  
\begin{eqnarray}
&& \liminf_{n \to \infty} n_{1}^{I}(n)^{r} \sum_{a \in J_{n, I}} 
\int_{q_{n}^{-1}(a)} | x - a |^{r} d \mu (x) \nonumber \\
& \geq &
\liminf_{n \to \infty}  Q(r) n_{1}^{I}(n)^{r} 
\sum_{a \in J_{n, I}} 
\left(  n_{1}^{I}(n)^{-1} - \varepsilon_{1}(n) \right)^{1+r} 
\left(  \int_{T_{n,I}} h^{1/(1+r)} d \lambda  \right)^{1+r} \nonumber \\
&=& Q(r) \left( \int_{I} h^{1/(1+r)} d \lambda \right)^{1+r}  . 
\label{limi_low_b}
\end{eqnarray}
Now, (\ref{lims_upp_b}) and (\ref{limi_low_b}) yield the assertion.
\end{proof}

Finally, we are able to state and prove the main result of the whole paper.
In the proof we rely on Lemma \ref{lemm_i1}. To this end we have to control 
$n/n_{1}^{I_{0}}(n)$ for a (suitable chosen) fixed interval $I_{0}$.
This is possible by the monotonicity of $n_{2}^{I_{0}}(\cdot)$ according to
Lemma \ref{mom_mon} (a).

\begin{theorem}
\label{main_theo}
Let $\mu$ be weakly unimodal and $(q_{n})_{n \in \mathbb{N}}$ the sequence of 
$n-$level Gersho quantizers for $\mu$. If $C_{0} < \infty$, then
$(q_{n})_{n \in \mathbb{N}}$ is asymptotically optimal.
\end{theorem}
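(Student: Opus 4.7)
The plan is to show $\liminf_{n} n^{r} D(\mu,q_{n},r) \ge C_{0}$ and $\limsup_{n} n^{r} D(\mu,q_{n},r) \le C_{0}$ separately. The central identity, which combines property (G4) with Lemma~\ref{lemm_i1}, is that for every compact $I \subset S(\mu)$ property (G4) gives $\sum_{a \in J_{n,I}} \int_{q_{n}^{-1}(a)} |x - a|^{r}\,d\mu(x) = n_{1}^{I}(n) \cdot D(\mu,q_{n},r)/n$, and so Lemma~\ref{lemm_i1} reads
\[
  \Bigl(\frac{n_{1}^{I}(n)}{n}\Bigr)^{\!1+r} n^{r} D(\mu,q_{n},r) \;=\; \frac{n_{1}^{I}(n)^{1+r}}{n}\,D(\mu,q_{n},r) \;\xrightarrow[n\to\infty]{}\; C_{0}(I) := Q(r)\Bigl(\int_{I} h^{1/(1+r)}\,d\lambda\Bigr)^{\!1+r}.
\]

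For the lower bound I use $n_{1}^{I}(n) \le n$: the displayed convergence then gives $\liminf_{n} n^{r}D(\mu,q_{n},r) \ge C_{0}(I)$ for every such $I$. Choosing a nested sequence $I_{m} \subset S(\mu)$ with $C_{0}(I_{m}) \to C_{0}$ (available because $h$ is a Lebesgue density on $\mathbb{R}$ and $C_{0} < \infty$), letting $m \to \infty$ yields $\liminf_{n} n^{r}D(\mu,q_{n},r) \ge C_{0}$.

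For the upper bound, the same identity rearranges to $n^{r} D(\mu,q_{n},r) \le (n/n_{1}^{I}(n))^{1+r}(C_{0}(I) + o(1))$, so it suffices to produce, for each $\varepsilon > 0$, a compact $I_{0} \subset S(\mu)$ with $C_{0}(I_{0}) \ge (1-\varepsilon)C_{0}$ and $\limsup_{n} n/n_{1}^{I_{0}}(n) \le 1+\varepsilon$. Writing $n_{1}^{I_{0}}(n) = n - n_{2}^{I_{0}}(n) - O(1)$, the second condition is equivalent to $\limsup_{n} n_{2}^{I_{0}}(n)/n \le \varepsilon$, i.e.\ the fraction of codecells lying entirely outside $I_{0}$ must eventually be small. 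This uniform tail bound is the main obstacle.

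To establish it I would use the cell-diameter estimate derived from (G3) with the midpoint of the codecell as a suboptimal comparison codepoint,
\[
  \frac{D(\mu,q_{n},r)}{n} = \int_{q_{n}^{-1}(a)} |x-a|^{r}\,d\mu(x) \le Q(r)\,\Bigl(\sup_{x \in q_{n}^{-1}(a)} h(x)\Bigr)\,\diam\bigl(q_{n}^{-1}(a)\bigr)^{1+r},
\]
combined with the lower bound $n^{r} D(\mu, q_{n}, r) \ge C_{0}-\eta$ already proved, which together yield a positive lower bound on $\diam(q_{n}^{-1}(a))$ for every codecell. Summing this over the $n_{2}^{I_{0}}(n)$ codecells outside $I_{0}$ and comparing with the Lebesgue extent of $\supp(\mu)\setminus I_{0}$ controls $n_{2}^{I_{0}}(n)/n$ by a quantity that can be made arbitrarily small by enlarging $I_{0}$, at least when $\supp(\mu)$ is bounded; the unbounded-support case is similar near $\partial I_{0}$, while far-tail codecells are handled by applying Lemma~\ref{lemm_i1} to a larger $I' \supsetneq I_{0}$ (using $C_{0}(I')/C_{0}(I_{0}) \to 1$ as $I' \uparrow \supp(\mu)$). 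As signalled in the paper's pre-proof remark, the role of the monotonicity of $n \mapsto n_{2}^{I_{0}}(n)$ from Lemma~\ref{mom_mon}(a) is to promote the pointwise asymptotics from Lemma~\ref{lemm_i1} to a uniform-in-$n$ bound on the fraction of tail codecells, ruling out oscillations that would otherwise leave $\limsup_{n} n/n_{1}^{I_{0}}(n)$ uncontrolled. Once this uniform bound holds, the upper and lower estimates combine to give $n^{r}D(\mu,q_{n},r) \to C_{0}$, i.e.\ asymptotic optimality in the sense of Definition~\ref{defc0kj}.
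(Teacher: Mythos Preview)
Your lower bound and the central identity linking (G4) with Lemma~\ref{lemm_i1} are correct and match the paper. The genuine gap is in the upper bound when $\supp(\mu)$ is unbounded.

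Your diameter-counting argument needs $\lambda(\supp(\mu)\setminus I_{0})<\infty$ to conclude that the number of tail cells is $O(n\cdot\lambda(\supp(\mu)\setminus I_{0}))$. Your proposed fix --- passing to a larger $I'\supsetneq I_{0}$ and invoking Lemma~\ref{lemm_i1} again --- is circular: dividing the two limits only gives $n_{1}^{I'}(n)/n_{1}^{I_{0}}(n)\to (C_{0}(I')/C_{0}(I_{0}))^{1/(1+r)}$, which controls cells \emph{inside} $I'$ relative to those inside $I_{0}$, not the cells outside $I'$. You are left needing $n/n_{1}^{I'}(n)$ bounded, which is the same problem for $I'$. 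And while you correctly sense that Lemma~\ref{mom_mon}(a) is the missing ingredient, you never say concretely how monotonicity of $n_{2}^{I_{0}}(\cdot)$ delivers a uniform bound; by itself, $n_{2}^{I_{0}}(n)$ increasing does not prevent $n_{2}^{I_{0}}(n)/n$ from having $\limsup$ equal to~$1$.

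The paper's actual mechanism is different and worth noting. Instead of an arbitrary $I_{0}$, it takes $I_{0}=[x_{1}(n_{0}),x_{2}(n_{0})]$, the span of the interior codecells of $q_{n_{0}}$, so that $n_{2}^{I_{0}}(n_{0})=2$ exactly. Uniqueness of Gersho quantizers (Proposition~\ref{ref_prop_nempt}) implies that $q_{2m}$ is obtained from $q_{m}$ by splitting each codecell in two, whence $n_{2}^{I_{0}}(n_{0}2^{k})=2^{k+1}$ for all $k$. Lemma~\ref{mom_mon}(a) then interpolates between dyadic levels: for $n_{0}2^{k}\le n<n_{0}2^{k+1}$ one gets $n_{2}^{I_{0}}(n)\le 2^{k+2}$, hence $n_{2}^{I_{0}}(n)/n\le 4/n_{0}$. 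This yields $n/n_{1}^{I_{0}}(n)\le (1-6/n_{0})^{-1}$, and since $n_{0}$ is arbitrary the upper bound follows. The dyadic self-similarity of the Gersho construction is the idea your proposal is missing.
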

\begin{proof}
Let $n \in \mathbb{N}$ and 
denote by $x_{1}(n)$ the right endpoint of the leftmost codecell of $q_{n}$.
Similarly, let $x_{2}(n)$ the left endpoint of the rightmost codecell of $q_{n}$.
More exactly,
\[
x_{1}(n) = \sup q_{n}^{-1}(\min q_{n} ( \mathbb{R} )), \qquad x_{2}(n) = \inf q_{n}^{-1}(\max q_{n} ( \mathbb{R} )) .
\]
Recall (cf.(\ref{equ_1_n})) that $D(\mu , q_{n} , r) \leq D(\mu , q_{1} , r)$.
Consequently, using (G4) we obtain that
\begin{eqnarray*}
&& \int_{(-\infty, x_{1}(n))} | x - \min q_{n} ( \mathbb{R} ) |^{r} d \mu (x) 
= \int_{(x_{2}(n) , \infty )} | x - \max q_{n} ( \mathbb{R} ) |^{r} d \mu (x)   \\
&=& n^{-1} D( \mu , q_{n} , r )  \leq n^{-1} D( \mu , q_{1} , r ) \to 0 \qquad \text{as } n \to \infty .
\end{eqnarray*}
Thus, Lemma \ref{lemm_ex_01} implies that
\[
x_{1}(n) \to \inf ( \supp ( \mu ) ) \quad  \text{ and } \quad  
x_{2}(n) \to \sup ( \supp ( \mu ) ) \qquad \text{ as } n \to \infty .
\] 
Now choose $n_{0}>6$ such that $\mu ( I_{0} ) > 0$ with 
\[
I_{0}=[x_{1}(n_{0}), x_{2}(n_{0})] \subset \supp ( \mu ).
\]
Now let $n \geq n_{0}$ and define $k(n) \in \{0,1,\dots \}$ such that
\begin{equation}
\label{njhu789}
n_{0} 2^{k(n)} \leq n < n_{0} 2^{k(n)+1} .
\end{equation}
Recall definition (\ref{n2in}).
By the definition of $I_{0}$ we obtain $n_{2}^{I_{0}}(n_{0} )=2$.
Due to the uniqueness of $q_{n}$ (cf. Proposition \ref{ref_prop_nempt}) we get inductively that
\[
n_{2}^{I_{0}}(n_{0} 2^{k(n)})=2^{k(n)+1} \qquad \text{resp. } \qquad n_{2}^{I_{0}}(n_{0} 2^{k(n)+1})=2^{k(n)+2} .
\]
Now, (\ref{njhu789}) and Lemma \ref{mom_mon} (a) imply
\[
2^{k(n)+1} \leq n_{2}^{I_{0}}(n) \leq 2^{k(n)+2} ,
\]
yielding
\[
\frac{n_{2}^{I_{0}}(n)}{n} \leq \frac{2^{k(n)+2}}{n_{0} 2^{k(n)}} = \frac{4}{n_{0}}
\]
Thus we deduce from definition (\ref{n2in}) and (\ref{jni_Def}) that
\begin{equation}
\label{nn12wert}
1 \leq \frac{n}{n_{1}^{I_{0}}(n)} \leq \frac{n}{n - n_{2}^{I_{0}}(n) - 2} \leq 
\frac{1}{1 - \frac{4}{n_{0}} - \frac{2}{n} } 
\leq
\frac{1}{1 - \frac{6}{n_{0}}} . 
\end{equation}
Applying (G4) we get
\begin{eqnarray}
n^{r} D(\mu , q_{n}, r) & = & 
n^{1+r}  \frac{D(\mu , q_{n}, r)}{n}   \nonumber \\
&=&
\left( \frac{n}{n_{1}^{I_{0}}} \right)^{1+r}   (n_{1}^{I_{0}})^{r}  
\sum_{a \in J_{n, I_{0}}} \int_{q_{n}^{-1}(a)} | x - a |^{r} d \mu (x) . 
\label{narr_qu_fis}
\end{eqnarray}
Now, (\ref{nn12wert}), (\ref{narr_qu_fis}) and Lemma \ref{lemm_i1} are yielding
\begin{equation}
\label{lkpoiu}
\limsup_{n \to \infty } n^{r} D ( \mu , q_{n}, r ) \leq  
\left( \frac{1}{1 - \frac{6}{n_{0}}} \right)^{1+r} Q(r) \left( \int_{I_{0}} h^{1/(1+r)} d \lambda \right)^{1+r} .
\end{equation}
By the same arguments we also deduce
\begin{equation}
\label{lboijku}
\liminf_{n \to \infty } n^{r} D ( \mu , q_{n}, r ) \geq  
Q(r) \left( \int_{I_{0}} h^{1/(1+r)} d \lambda \right)^{1+r} .
\end{equation}
Because the choice of $n_{0}>6$ was arbitrary, the assertion follows from (\ref{lkpoiu}) and (\ref{lboijku}).
\end{proof}
\begin{remark}
Alternatively, relation (\ref{lboijku}) follows also directly from Theorem \ref{stdres}.
\end{remark}

\section{Concluding remarks}

\begin{remark}
Although condition (\ref{xrdetz}) in 
Theorem \ref{stdres} cannot be dropped completely (cf. \cite[Example 6.4]{GrLu00}), it
can be weakened into $C_{0}< \infty$ for one-dimensional 
weakly unimodal distributions in view of Theorem \ref{main_theo}.
Indeed, (\ref{xrdetz}) is only needed to prove in 
Theorem \ref{stdres} that $n^{r} D_{n,r}(\mu )$ is upper-bounded by $C_{0}$, 
but this follows from Theorem \ref{main_theo} if $\mu$ is weakly unimodal and $C_{0} < \infty$.  
\end{remark}

\begin{remark}
It seems that Theorem \ref{main_theo} is also valid if the density $h$ consists of a 
finite linear combination of weakly unimodal densities.
Nevertheless it remains open, if Gersho quantizers are asymptotically optimal
for arbitrary densities with finite $r-$th moment and $C_{0} < \infty$.
Moreover, it would be interesting to know if Theorem \ref{main_theo} remains valid
for those distributions whose Gersho quantizers are unique.
\end{remark}

\begin{remark}
The methods of this paper are strictly confined to the one-dimensional case.
To find answers regarding the existence, construction and asymptotic optimality of Gersho quantizers in 
higher dimensions further research is necessary.
\end{remark}

\begin{remark}
For dyadic homogeneous one-dimensional Cantor distributions 
\cite{GrLus97, KeZh07, Kr08} it is known that
\[
0 < \underline{C} = 
\liminf_{n \to \infty } n^{r} D_{n,r}(\mu ) < 
\limsup_{n \to \infty } n^{r} D_{n,r}(\mu ) = \overline{C} < \infty .
\]
It is easy to check for these measures that for $n=2^{k}$ a unique optimal quantizer $q_{k}$ exists,
these quantizers are also Gersho quantizers, and that $2^{kr} D_{2^{k},r}(\mu )$
tends to $\underline{C}$. Hence,
\begin{equation}
\label{underex}
\underline{C} = \liminf_{n \to \infty } n^{r} \inf \{ D(\mu ,q, r ) : q \in \mathcal{G}(n,\mu, r) \}
\end{equation}
and by definition
\begin{equation}
\label{overex}
\overline{C} \leq \limsup_{n \to \infty } n^{r} \inf \{ D(\mu ,q, r ) : q \in \mathcal{G}(n,\mu, r) \}.
\end{equation}
It remains open, if (\ref{overex}) is also an equation or not.
Moreover, there exist distributions \cite{GrLu05} which are non-atomic and singular to the Lebesgue measure but
$\underline{C} = \overline{C}$. It remains also open, if (\ref{underex}) is also valid for 
these distributions or 
if even (\ref{overex}) turns into an equation for such measures.
\end{remark}

\begin{remark}
Let $I \subset \mathbb{R}$ be a compact interval and let 
$q_{n}$ be a quantizer with $n$ codepoints. Recall definitions (\ref{jni_Def}).
If $(q_{n})_{n \in \mathbb{N}}$ is asymptotically optimal for $\mu = h \lambda$ and (\ref{xrdetz}) is satisfied, 
then Bucklew \cite{Bu84} has shown that  
\begin{equation}
\label{dens_conv}
\frac{n_{1}^{I}(n)}{n} \to \frac{\int_{I}h^{\frac{1}{1+r}}d \lambda}{\int_{\mathbb{R}}h^{\frac{1}{1+r}}d \lambda}
\qquad \text{as } n \to \infty .
\end{equation}
Hence, Remark (\ref{implfin}) and Theorem \ref{main_theo} implies that (\ref{dens_conv}) holds also 
for Gersho quantizers if $\mu$ is weakly unimodal and satisfies (\ref{xrdetz}).
Beside of this point density result, one is also interested in densities for the error and the mass of the codecells. 
More formally, let 
\[
E(n,I,q_{n}) = \frac{\int_{I} | x - q_{n}(x) |^{r} d \mu (x) }{D(\mu , q_{n}, r)} 
\]
and
\[
M(n, I, q_{n}) = 
\sup \{  | n \mu ( q_{n}^{-1}(a) ) - h^{r/(1+r)}(a) \cdot \left( \int h^{1/(1+r)} d \lambda \right)  |   : 
a \in J_{n,I}  \} .
\]
By analyzing the approach in \cite{Bu84} and \cite{KrLi11}, it is reasonable to conjecture 
that 
\begin{equation}
\label{asymp_err}
E(n,I,q_{n}) \to \frac{\int_{I}h^{\frac{1}{1+r}}d \lambda}{\int_{\mathbb{R}}h^{\frac{1}{1+r}}d \lambda} 
\qquad \text{as } n \to \infty ,
\end{equation}
i.e. that error and point density are asymptotically equal for asymptotically optimal quantizers, which would imply
that (\ref{asymp_err}) holds also for Gersho quantizers if $\mu$ is weakly unimodal and satisfies (\ref{xrdetz}).
Finally, due to \cite[Theorem 4]{DeFoPa04} we know that 
$M(n, I, q_{n}) \to 0$ if $h$ is weakly unimodal and $q_{n}$ is an
asymptotically optimal stationary Voronoi quantizer. 
Note that a Voronoi quantizer is stationary if every codecell has non-vanishing $\mu-$mass and (G3) is satisfied.
Although, Gersho quantizers satisfy (G3) by definition they need not to be Voronoi quantizers.
However, we
conjecture that $M(n, I, q_{n}) \to 0$ still holds for Gersho quantizers if $h$ is weakly unimodal and
(\ref{xrdetz}) is satisfied.  
\end{remark}

\begin{remark}
As already mentioned in the introduction, Delattre et al. \cite{DeFoPa04} have shown that
asymptotically optimal stationary quantizers $q_{n}$ satisfy (G4) asymptotically, i.e. 
\begin{equation}
\label{g4prop}
G(n, I, q_{n}) = \sup \{  | n^{1+r} \int_{q_{n}^{-1}(a)} | x - a |^{r} d \mu (x) - C_{0} |   : a \in J_{n,I}  \} \to 0 
\end{equation}
as $n \to \infty$.
Note, that asymptotic optimality does not imply (\ref{g4prop}).
To see this let $\mu$ be the uniform distribution on $[0,1]$, let $\varepsilon \in (0,1)$ and consider the quantizer
\begin{eqnarray*}
&& q_{n}^{\varepsilon }(x) \\
&=& \frac{\varepsilon}{n} \cdot 1_{[0, \frac{\varepsilon}{n} )}(x)  + \sum_{i=1}^{n-1} 
\left( \frac{\varepsilon}{n} + \frac{1 - \frac{\varepsilon}{n}}{n-1} \left( \frac{3}{2}i - 1 \right)  \right) \cdot
1_{[\frac{\varepsilon}{n} + (i-1) \cdot \frac{1 - \frac{\varepsilon}{n}}{n-1} , \frac{\varepsilon}{n} 
+ i \cdot \frac{1 - \frac{\varepsilon}{n}}{n-1} )}(x).
\end{eqnarray*} 
By a straightforward calculation one gets 
\[
n^{r} D( \mu , q_{n}^{\varepsilon } , r ) = Q(r) \left( \varepsilon^{1+r} n^{-1} + 
\frac{n-1}{n} \left( 1 + \frac{1 - \varepsilon }{n-1} \right)^{1+r}   \right) \to Q(r)
\]
as $n \to \infty$. Hence, $(q_{n}^{\varepsilon }(x))_{n \in \mathbb{N}}$ is asymptotically optimal
and satisfies (G1), (G2) and (G3), but satisfies neither (G4) nor (\ref{g4prop}). 

Assume that (\ref{xrdetz}) is satisfied and the distribution is weakly unimodal.
By Theorem \ref{stdres} and Theorem \ref{main_theo} we know that optimal quantizers and 
Gersho quantizers are asymptotically optimal. Nevertheless, quantizers can exist - as shown by this example -
which are neither optimal nor Gersho quantizers but asymptotically optimal.
Motivated by this and (\ref{g4prop}) let us define the condition 
\[
(G5) n^{1+r} \max \{ | \int_{q_{n}^{-1}(a)} |x - a|^{r} d \mu (x) - \frac{D(\mu , q_{n} , r)}{n} | 
: a \in q_{n}(\mathbb{R})  \} \to 0 \text{ as } n \to \infty .
\]
Now consider the class of quantizers which are satisfying conditions (G1), (G2), (G3) and (G5).
Of course, (G5) is weaker than (G4). Hence, Gersho quantizers are contained in this class but it is
open if optimal quantizers are also a part of this class.
Again, the example from above demonstrates that this class does not contain all asymptotically optimal
quantizers. It remains open if every quantizer of this class is asymptotically optimal. 
\end{remark}

\begin{remark}
To the authors knowledge there are no general results available so far 
concerning the rate of convergence in (\ref{asympform}).
Fort and Pages \cite[Theorem 5]{FoPa02} have shown for three special families of
scalar distributions (exponential, power and inverse power) that
\begin{equation}
\label{asymprate}
\limsup_{n \to \infty } |C_{0} - n^{r}D( \mu , q_{n} , r )| \frac{n}{\log (n)} < \infty ,
\end{equation}
if $q_{n} \in \mathcal{C}(n , \mu , r)$.
It remains open, if (\ref{asymprate}) is still true for Gersho quantizers.
Moreover, it is unclear if (\ref{asymprate}) is generally true for weakly unimodal densities
and Gersho quantizers.
\end{remark}

\subsubsection*{Acknowledgements}
The author is indebted to the referee for his/her valuable 
suggestions and comments, which improved the quality and presentation of this paper.

\noindent 
Wolfgang Kreitmeier \\
Department of Informatics and Mathematics \\ 
University of Passau \\
Innstra\ss e 33, 94032 Passau, Germany \\
E-mail: \texttt{wkreitmeier@gmx.de} \\
Phone:  \texttt{+49851/509-3014} \\
Fax: \texttt{+49851/509-373014} \bigskip \\

\end{document}